\numberwithin{equation}{section}
\newtheorem{theorem}{Theorem}[section]
\newtheorem{lemma}[theorem]{Lemma}
\newtheorem{proposition}[theorem]{Proposition}
\newtheorem{corollary}[theorem]{Corollary}
\newtheorem{remark}[theorem]{Remark}
\numberwithin{equation}{section}
\def\bbr{{\mathbb R}}
\def\bbn{{\mathbb{N}}}
\newcommand{\mi}{\mathrm{i}}
\newcommand{\mb}[1]{\mathbf{#1}}
\newcommand{\coma}{\; , \;}
\newcommand{\dx}[1]{\, \mathrm{d} #1}
\title[Asymptotic acoustic behaviors due to high contrast]{On asymptotic behaviors of acoustic waves due to high-contrast material inclusions}
\author{Yueguang Hu}
\address{Department of Mathematics, City University of Hong Kong, Kowloon, Hong Kong, China}
\email{yueghu2-c@my.cityu.edu.hk}
\author{Hongyu Liu}
\address{Department of Mathematics, City University of Hong Kong, Kowloon, Hong Kong, China}
\email{hongyu.liuip@gmail.com, hongyliu@cityu.edu.hk}
\begin{document}

\begin{abstract}


This paper investigates the asymptotic behaviors of time-harmonic acoustic waves generated by an incident wave illuminating inhomogeneous medium inclusions with high-contrast material parameters. 
We derive sharp asymptotic estimates and obtain several effective acoustic obstacle scattering models when the material parameters take extreme values. 
The results clarify the connection between inhomogeneous medium scattering and obstacle scattering for acoustic waves, providing a clear criterion for identifying the boundary conditions of acoustic obstacles in practice.
The contributions of this paper are twofold.
First, we provide a rigorous mathematical characterization of the classical sound-hard and sound-soft obstacle scattering models. We demonstrate that a sound-hard obstacle can be viewed as an inhomogeneous medium inclusion with infinite mass density, while a sound-soft obstacle corresponds to an inclusion with zero mass density and zero bulk modulus. Second, we introduce two novel acoustic obstacle scattering models when the mass density of the inclusion degenerates to zero. These new models offer a fresh perspective on considering inhomogeneous medium inclusions with high-contrast material parameters.



%

\vspace{0.3cm}

\medskip

\noindent{\bf Keywords:}~~ acoustic wave scattering, high-contrast, asymptotic behaviors, transmission boundary condition,  Helmholtz system.

\end{abstract}
 
\maketitle

\section{background introduction and mathematical motivation}

The identification of boundary conditions on the acoustic obstacle surface or the interface between homogeneous background medium and inhomogeneous penetrable inclusion plays a crucial role in the mathematical modeling of acoustic wave scattering. When studying the acoustic interactions with an inhomogeneous medium inclusion with high-contrast material parameters, it is convenient and reasonable to approximate the inclusion as an impenetrable obstacle with an appropriate boundary condition. This approximation simplifies the analysis of the physical system, making the identification of boundary conditions on the obstacle particularly important. In this paper, we quantitatively characterize the asymptotic behaviors of acoustic waves interacting with a high-contrast inhomogeneous medium inclusion and prove that the inhomogeneous medium scattering converges to several obstacle scattering scenarios as the material parameters degenerate to different extreme values.

The primary purpose of this paper is to clarify the connection between inhomogeneous medium scattering and obstacle scattering for acoustic waves. This study is motivated by the reconstruction of complex scatterers with large mass density parameters considered in \cite{Liu2011}. 
For inhomogeneous medium scattering, the transmission boundary conditions between  background medium and inhomogeneous  inclusion describe how acoustic waves behave when they cross the interface \cite{pierce2019acoustics}. It is physically assumed that the acoustic pressure and particle velocity must be continuous on both sides of the boundary, meaning that the acoustic pressure and particle velocity on one side of the boundary must equal those on the other side. 
These continuities result from the conservation of mass and momentum. If this were not the case, it would produce an infinite force or acceleration acting at the boundary, leading to unrealistic physical situations.
When the inhomogeneous inclusion takes extreme material parameters, the acoustic wave generally cannot pass through the interface, causing the inhomogeneous medium acoustic scattering to degenerate into acoustic obstacle scattering. The corresponding transmission boundary condition on the interface also becomes sound-hard, sound-soft, or other boundary conditions depending on the types of extreme material parameters. 
In this paper, we derive sharp estimates of the acoustic wave when the mass density and bulk modulus degenerate to zero or infinity, respectively. 
When the mass density degenerates to infinity, the inhomogeneous medium inclusion becomes a sound-hard obstacle with a homogeneous Neumann boundary condition. Conversely, a sound-soft obstacle can be seen as an inhomogeneous medium inclusion with zero mass density and zero bulk modulus. 
Additionally, we introduce two novel obstacle scattering models when the mass density degenerates to zero, which are entirely new in acoustic wave scattering.
These results not only provide a mathematical criterion for identifying boundary conditions in real physical situations but also offer a clear guidance in the construction of acoustic materials for specific applications.

Another important motivation in this paper originates from the gradient estimate problem among multiple nearly touching inclusions in the theory of composite materials \cite{budiansky1984high, Markenscoff1992}.
 In the context of composite materials, the blowup order of the gradient of the underlying field represents the degree of field or stress concentration. Most linear fracture models assume that fracturing will occur at points with extreme field or stress concentration. Capturing the singular behaviors of the gradient is significant from both theoretical and engineering perspectives.
When the material parameters are bounded away from zero and infinity, the gradient is generally uniformly bounded, regardless of how close the inclusions approach each other \cite{Bonnetier2000, Li2003}. 
However, gradient blowup typically occurs in the narrow interval between the inclusions when the material parameters degenerate to zero or infinity. Numerous studies focus on electrostatics \cite{li2023gradient, kang2014characterization}, linear elasticity \cite{kang2019quantitative, chen2021estimates}, and more general elliptic equations \cite{Dong2021, Dong2024}.
There are also studies concerning field concentration for wave fields in the frequency regime where the maximum principle fails. This research is motivated by the theory of composite materials in photonics and phononics, where high-contrast building blocks are used to manipulate waves \cite{ammari2018minnaert, li2022minnaert}. The related studies usually concern the quasi-static case, namely $\omega \cdot \mathrm{diag}(\Omega) \ll 1$ \cite{Deng2022, Deng2022a}, and the specific subwavelength resonant regime \cite{ammari2020close, li2023interaction}.
Since material and geometric irregularities are both crucial for the occurrence of gradient blowup, it is highly meaningful to consider inhomogeneous medium acoustic scattering with extreme material parameters. We would investigate the related gradient estimate problem for multiple high-contrast and nearly-touching inclusions in future research.


The rest of this paper is organized as follows. 
In Section 2, we present the mathematical model of acoustic wave scattering and the main results in this paper. 
In Section 3, we provide the complete proof of the main results, based on some a priori estimates and well-posedness results. 
In Section 4, we consider the acoustic wave scattering problem for radial geometry, which reinforces our findings. 
For completeness, we collect some useful formulas for layer potentials and asymptotic formulas of spherical Bessel and Hankel functions in Appendix A. 
\section{Mathematical setups and main results} \label{1dot2}
In this section, we present the mathematical formulation and main results in this paper.
Let us consider the time-harmonic acoustic wave scattering problem arising from an incident field $u^i$ and a  $C^{2,\alpha}$ embedded inhomogeneous medium inclusion $(\Omega;\tilde{\rho},\tilde{\kappa})$ in a uniformly homogeneous medium $\bbr^N, N=2,3$. 
Here we denote by  $(\tilde{\rho},\tilde{\kappa})$ the related mass density and bulk modulus inside $\Omega$, respectively,  and $(\rho,\kappa)$ are  for the homogeneous background medium $\bbr^N \backslash \overline{\Omega}$.
we introduce several dimensionless parameters describing the contrast of material parameters in different regions, namely
\begin{equation*}
\delta = \frac{\tilde{\rho}}{\rho} \coma \tau = \frac{\tilde{\kappa}}{\kappa}  \coma n = \sqrt{\frac{\delta}{\tau}}.
\end{equation*} 
We also include the possibility that the inclusion is absorbing, i.e.
$$\tilde{\kappa} = \kappa_0 (\alpha - \beta\mi) \coma  \beta>0 \coma \mathrm{i}:=\sqrt{-1},$$
where $\kappa_0$ is the magnitude of bulk modulus and $\beta$ is a positive damping coefficient describing the thermal losses.
The incident field $u^i \in H_{loc}^1(\bbr^N)$ is an entire solution to homogeneous Helmholtz equation,
$$\Delta u^i(\mb{x}) + k^2 u^i(\mb{x}) =  0  \coma \mb{x} \in \bbr^N ,$$ 
where $\omega$ is the angular frequency and $k := \omega \sqrt{\rho / \kappa}$ describes the wavenumber of incident field. The incident field $u^i$ impinging on the inclusion $\Omega$ gives rise to the scattered field $u^s_{_\delta}$. Let $u_{_\delta}:=u^i +u^s_{_\delta}$ denotes the total field. 
The forward acoustic scattering problem is described by the following Helmholtz system,
\begin{equation}\label{9-1}
\begin{cases}
 \displaystyle \nabla \cdot \left(\frac{1}{\rho} \nabla u_{_\delta} \right)  + \omega^2 \frac{1}{\kappa} u_{_\delta} = 0  \hspace{45pt} \text{in} \quad \bbr^N \backslash \overline{\Omega}, \medskip \\
\displaystyle \nabla \cdot \left(\frac{1}{\tilde{\rho}} \nabla u_{_\delta} \right)  + \omega^2 \frac{1}{\tilde{\kappa}} u_{_\delta} = 0   \hspace{45pt} \mathrm{in} \quad \Omega , \medskip \\
u_{_{\delta}}|_+ = u_{_{\delta}}|_-   \coma \left. \frac{1}{\rho}\frac{\partial u_{_{\delta}}}{\partial v}\right|_+ = \left.\frac{1}{\tilde{\rho}}\frac{\partial u_{_{\delta}}}{\partial v}\right|_- \hspace{18pt} \textrm{on} \quad \partial \Omega, \medskip \\
u^s_{_\delta} \quad  \textrm{satisfies the Sommerfeld radiation condition},
\end{cases}
\end{equation}
where $\partial_{\nu}u = \hat{\mb{x}} \cdot \nabla u$ with $\hat{\mb{x}} = \mb{x}/|\mb{x}| \in \mathbb{S}^{N-1}$ and $|_{\pm}$ denotes the limits from outside and inside $\Omega$.
The Sommerfeld radiation condition implies that
\begin{equation*}
\lim_{r \rightarrow \infty} |\mb{x}|^{\frac{N-1}{2}}\left(\frac{\partial u^s_{_{\delta}} }{\partial \nu}-\mi k u^s_{_{\delta}}  \right) = 0,
\end{equation*}
where $\mi $ denotes the imaginary unit and the limit characterizes outgoing nature of the scattered field in infinity.  We refer to \cite{kress2013linear} for the well-posedness of the scattering problem \eqref{9-1} in $H_{loc}^1(\bbr^N)$.
Furthermore, the total field has the following asymptotic expansions:
\begin{equation*} 
u_{_\delta}(\mb{x}) = u^i(\mb{x}) + \frac{e^{\mi k |\mb{x}|}}{|\mb{x}|^{(N-1)/2}}u_{_\delta,\infty}(\mb{x}) + \mathcal{O}\left(\frac{1}{|\mb{x}|^{(N+1)/2}} \right) \quad \text{\rm as} \quad |\mb{x}| \rightarrow \infty,
\end{equation*}
and it holds uniformly for all directions $\hat{\mb{x}}$. The complex-value function $u_{_\delta,\infty}$, defined on the unit sphere $\mathbb{S}^{N-1}$ , is known as the far field pattern and encodes the scattering information caused by the perturbation of the incident field $u^i$ due to the scatterer $(\Omega;\tilde{\rho}, \tilde{\kappa})$.
Though  the Helmholtz equation  models the transverse electromagnetic field in two dimensions, it is insightful to study  two-dimensional case  and after adapting the physical parameters one could use this setting to study electromagnetic wave.

In this paper, we are concerned with the asymptotic behaviors of the acoustic wave  when the material parameters are high-contrast  between inside and outside $\Omega$.  By choosing appropriate physical units, we may assume that the parameters $(\rho, \kappa)$ outside the inclusion, $(\alpha,\beta)$  and the angular frequency $\omega$ are of order one.
\begin{theorem}\label{mainresult}
Consider the inhomogeneous medium scattering problem \eqref{9-1} and let $u_{_\delta}  \in H^1_{loc}(\bbr^N)$ be the unique solution, then
\begin{enumerate}
\item when $\delta \rightarrow 0$, $u_{_\delta}$  converges to $v$ in $ H^1_{loc}(\bbr^N \backslash \overline{\Omega})$ where $v$ is the unique solution to the following obstacle scattering:
\begin{equation}\label{9-3}
\begin{cases}
\displaystyle \Delta v + k^2 v = 0   \hspace{80.5pt} \mathrm{in}  \quad  \bbr^N \backslash \overline{\Omega} , \\
\displaystyle \int_{\partial \Omega}  \frac{1}{\rho}\frac{\partial v}{\partial \nu}  \dx{s} +\frac{\omega^2}{\tilde{\kappa}} \mathrm{V}(\Omega) v = 0 \quad \mathrm{on} \quad  \partial \Omega,\\
\displaystyle v-u^i   \quad \text{\rm satisfies the Sommerfeld radiation condition}.
\end{cases}
\end{equation}
where $\mathrm{V}(\Omega)$ is the volume of the inhomogeneous medium inclusion $\Omega$.
\item when $\delta \rightarrow 0$ and $|\tau| \rightarrow \infty$, $u_{_\delta}$ converges to $t$ in $ H^1_{loc}(\bbr^N \backslash \overline{\Omega})$ where $t$ is the unique solution to the following  obstacle scattering:
\begin{equation}\label{9-222}
\begin{cases}
\displaystyle \Delta t + k^2 t = 0   \quad \mathrm{in}  \quad  \bbr^N \backslash \overline{\Omega} , \\
\displaystyle  t=c \hspace{49pt} \mathrm{on} \quad \partial \Omega, \\
\displaystyle t-u^i   \quad \text{\rm satisfies the Sommerfeld radiation condition},
\end{cases}
\end{equation}
where $c$ is some unknown constant determined by 
$$
\displaystyle \int_{\partial \Omega}  \frac{\partial t}{\partial \nu} \dx{s} = 0.$$
\item when $\delta \rightarrow 0$ and $|\tau| \rightarrow 0$, $u_{_\delta}$ converges to $w$ in $ H^1_{loc}(\bbr^N \backslash \overline{\Omega})$ where $w$ is the unique solution to the following sound-soft obstacle scattering:
\begin{equation}\label{9-22}
\begin{cases}
\displaystyle \Delta w + k^2 w = 0   \quad \mathrm{in}  \quad  \bbr^N \backslash \overline{\Omega} , \\
\displaystyle  w =0 \hspace{52pt} \mathrm{on} \quad \partial \Omega, \\
\displaystyle w-u^i   \quad \text{\rm satisfies the Sommerfeld radiation condition}.
\end{cases}
\end{equation}
\item when $\delta \rightarrow \infty$, $u_{_\delta}$ converges to $u$ in $ H^1_{loc}(\bbr^N \backslash \overline{\Omega})$ where $u$ is the unique solution to the following sound-hard  obstacle scattering:
\begin{equation}\label{9-2}
\begin{cases}
\displaystyle \Delta u + k^2 u = 0   \quad \mathrm{in}  \quad  \bbr^N \backslash \overline{\Omega} ,\\
\displaystyle \frac{\partial u}{\partial \nu} =0 \hspace{42pt} \mathrm{on}\quad \partial \Omega,\\
\displaystyle u-u^i   \quad \text{\rm satisfies the Sommerfeld radiation condition}.
\end{cases}
\end{equation}
\end{enumerate}
\end{theorem}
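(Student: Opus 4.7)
The plan is to treat the four cases in a unified way via energy estimates combined with a compactness argument: first establish a uniform $H^1_{loc}$ bound on $u_{_\delta}$ independent of the degenerating parameters; then extract a weak limit and identify its boundary condition by combining (i) the asymptotic behavior of $u_{_\delta}$ inside $\Omega$ forced by the weighted variational formulation of \eqref{9-1} and (ii) integration of the interior Helmholtz equation over $\Omega$ together with the Neumann transmission identity.

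\textbf{Step 1 (uniform a priori estimates).} I would prove that $\|u_{_\delta}\|_{H^1(B_R)}\le C$ with $C$ independent of $\delta,\tau$ (in the respective regimes) for any ball $B_R\supset\overline\Omega$, via a contradiction/compactness argument in the spirit of Lax--Phillips. Assume the bound fails, normalize $v_{_\delta}:=u_{_\delta}/\|u_{_\delta}\|_{H^1(B_R)}$, extract a weak $H^1_{loc}$ limit, and show it solves the corresponding limit problem with $u^i\equiv 0$; uniqueness of that limit problem then contradicts unit norm. In parallel, testing the variational form by $\overline{u_{_\delta}}$ and taking real/imaginary parts (using the absorbing term $-\beta\mi$ in $\tilde\kappa$ for coercivity) yields the weighted energy bounds
\begin{equation*}
\int_\Omega \frac{1}{\tilde\rho}|\nabla u_{_\delta}|^2\dx{x}\le C,\qquad \int_\Omega\frac{\beta}{|\tilde\kappa|^2}|u_{_\delta}|^2\dx{x}\le C,
\end{equation*}
from which $\|\nabla u_{_\delta}\|_{L^2(\Omega)}^2=O(\delta)$ as $\delta\to 0$ and $\|u_{_\delta}\|_{L^2(\Omega)}=o(1)$ as $|\tau|\to 0$.

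\textbf{Step 2 (interior structure and limiting boundary condition).} These weighted bounds encode the whole limiting structure. As $\delta\to 0$, $u_{_\delta}$ becomes asymptotically constant inside $\Omega$, hence its $\partial\Omega$-trace is a constant $c$ in the limit. As $|\tau|\to 0$, $u_{_\delta}\to 0$ strongly in $L^2(\Omega)$, forcing $c=0$, i.e.\ the sound-soft condition \eqref{9-22}. As $\delta\to\infty$, the transmission relation $\tfrac{1}{\rho}\partial_\nu u_{_\delta}|_+=\tfrac{1}{\tilde\rho}\partial_\nu u_{_\delta}|_-$ combined with the uniform $H^1(\Omega)$ bound gives $\partial_\nu u|_+=0$, the sound-hard condition \eqref{9-2}. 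To pin down the constant $c$ in the remaining cases, integrate the interior Helmholtz equation over $\Omega$ and apply the Neumann transmission condition to obtain
\begin{equation*}
\int_{\partial\Omega}\frac{1}{\rho}\frac{\partial u_{_\delta}}{\partial\nu}\bigg|_+\dx{s}+\frac{\omega^2}{\tilde\kappa}\int_\Omega u_{_\delta}\dx{x}=0.
\end{equation*}
Passing to the limit, $\int_\Omega u_{_\delta}\dx{x}\to c\,\mathrm{V}(\Omega)$; if $\tau$ stays finite this yields the integral condition in \eqref{9-3}, while if $|\tau|\to\infty$ the second term vanishes and we recover the compatibility $\int_{\partial\Omega}\partial_\nu t\dx{s}=0$ of \eqref{9-222}.

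\textbf{Step 3 (conclusion).} Rellich compactness upgrades weak to strong convergence on compact subsets of $\bbr^N\setminus\overline\Omega$; the Sommerfeld condition is preserved in the limit via Rellich's lemma applied to the exterior. Uniqueness of each limit problem promotes subsequential convergence to full-family convergence.

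\textbf{Main obstacle.} The hard part is Step 1: the standard well-posedness constant for \eqref{9-1} genuinely degenerates as $\delta,\tau\to 0,\infty$, so the contradiction argument must be set up carefully, and it presupposes that each of the four limit problems is itself uniquely solvable. The sound-soft and sound-hard problems are classical, but \eqref{9-3} and \eqref{9-222} feature nonstandard integral/constant-Dirichlet conditions whose well-posedness requires a separate Fredholm analysis via layer potentials (using the formulas collected in Appendix A). A secondary subtlety is justifying that the boundary trace is exactly constant in case (1) and case (2) rather than merely near-constant; this relies on the $O(\sqrt\delta)$ gradient decay inside $\Omega$ together with a trace estimate to transfer that decay to $\partial\Omega$.
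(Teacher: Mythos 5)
Your overall plan is a legitimate alternative to the paper's, but it departs from the paper's strategy in a substantive way, and the departure creates two real gaps. The paper does \emph{not} argue by extracting weak limits and identifying them. Instead, it proves quantitative well-posedness for each of the four limit problems (Lemmas \ref{9-7}--\ref{9-6}), then observes that the \emph{difference} $u_{_\delta}-v$, $u_{_\delta}-t$, $u_{_\delta}-w$, or $u_{_\delta}-u$ itself solves the corresponding exterior problem with boundary data that the a priori estimates \eqref{aprior1}--\eqref{aprior3} show to be small (e.g.\ $\|f\|_{H^{1/2}(\partial\Omega)}\le C\delta^{1/2}\|u^i\|$ in case (1), $\|\partial_\nu u_{_\delta}|_+\|_{H^{-1/2}(\partial\Omega)}\le C\delta^{-1/2}\|u^i\|$ in case (4)). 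Plugging this into the well-posedness estimate immediately yields $H^1_{loc}(\bbr^N\setminus\overline\Omega)$ convergence \emph{with rate}, up to the boundary, in one line. The uniform bound \eqref{9-4} is also proved by contradiction, as you propose, but the contradiction is closed by applying the quantitative estimate of Lemma \ref{9-7} directly to the normalized sequence rather than by weak-limit extraction.

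The first gap in your proposal is the upgrade to strong convergence. Knowing that every weak subsequential limit $v^*$ of the normalized sequence solves the limit problem with $u^i=0$ and hence $v^*\equiv 0$ does \emph{not}, by itself, contradict $\|v^n\|_{H^1(B_R\setminus\overline\Omega)}\equiv 1$: weak $H^1$ convergence to zero is compatible with unit norm, and Rellich only gives strong $L^2_{loc}$ convergence. You would need strong $H^1$ convergence \emph{up to $\partial\Omega$}, which for a sequence of radiating Helmholtz solutions is exactly what the quantitative exterior well-posedness estimate provides — so, once you try to close this gap, you are led back to proving (some version of) the paper's Lemmas \ref{9-7}--\ref{9-6} anyway. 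The second gap is in your treatment of case (4): the assertion of a ``uniform $H^1(\Omega)$ bound'' as $\delta\to\infty$ is false. Estimate \eqref{aprior1} reads $\delta^{-1/2}\|u_{_\delta}\|_{H^1(\Omega)}\le C\|u_{_\delta}\|_{H^1(B_R\setminus\overline\Omega)}$, i.e.\ the interior gradient may grow like $\delta^{1/2}$; only $\|u_{_\delta}\|_{L^2(\Omega)}$ stays bounded. Your inference ``transmission relation + uniform $H^1(\Omega)$ bound $\Rightarrow \partial_\nu u|_+=0$'' therefore does not go through as stated. The correct deduction, as in the paper, is $\|\partial_\nu u_{_\delta}|_+\|_{H^{-1/2}(\partial\Omega)}=\delta^{-1}\|\partial_\nu u_{_\delta}|_-\|_{H^{-1/2}(\partial\Omega)}\le C\delta^{-1}\|u_{_\delta}\|_{H^1(\Omega)}\le C\delta^{-1/2}\|u^i\|$, where the middle bound on the interior normal trace must itself be justified. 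Your Step 2 identification of the limit boundary conditions via integrating the interior equation over $\Omega$ and using the transmission relation coincides with the paper's computation and is correct; so is your observation that the Poincar\'e--Wirtinger inequality converts the $O(\sqrt\delta)$ interior gradient decay into near-constancy of the boundary trace.
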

There are some remarks in order about the main theorem.
\begin{remark}
These convergences have sharp estimates:
\begin{align*}
\|u_{_\delta} - v\| &\leqslant  C  \delta^{\frac{1}{2}} \|u^i\| \coma \hspace{40pt}
\|u_{_\delta} - t\| \leqslant  C  (\delta^{\frac{1}{2}} + |\tau|^{-\frac{1}{2}}) \|u^i\| ,  \\
\|u_{_\delta} - w\|&\leqslant  C  (\delta^{\frac{1}{2}} + |\tau|^{\frac{1}{2}}) \|u^i\| \coma
\|u_{_\delta} - u\| \leqslant  C  \delta^{-\frac{1}{2}} \|u^i\| ,
\end{align*}
where $C$ is some constant and  $\|\cdot\|:= \|\cdot\|_{H^1_{loc}(\bbr^N\backslash \overline{\Omega})}$.
\end{remark}

\begin{remark}
Generally speaking, the types of boundary conditions primarily depend on the contrast of mass density. When the mass density approaches infinity, the inhomogeneous medium inclusion typically becomes a sound-hard obstacle with a homogeneous Neumann boundary condition, regardless of the bulk modulus value. Conversely, when the mass density approaches zero, the boundary value on the inhomogeneous medium inclusion is generally constant. The identification of this constant Dirichlet value depends on the contrast of the bulk modulus.
\end{remark}

\begin{remark}

In addition to introducing two novel boundary conditions for acoustic obstacle scattering, we have also recovered the classical sound-hard and sound-soft obstacle scattering by considering extreme values of the material parameters. The classical impedance obstacle scattering is, in fact, a generalized model of \eqref{9-2} and can be derived using the result of \eqref{9-2} together with the factorization method. To obtain this boundary condition, it is necessary to modify the inhomogeneous medium scattering described in \eqref{9-1} by introducing an extremely thin, highly conductive layer between the inhomogeneous medium inclusion and the background medium (see \cite{bondarenko2016factorization}). However, we do not choose  to include this case, as it falls outside the scope of the inhomogeneous medium scattering model in \eqref{9-1}.
\end{remark}

\begin{remark}
An important and intriguing open problem is to study the gradient blowup between multiple nearly touching inclusions in the Helmholtz system, which is motivated by the stability of acoustic materials, particularly in high-contrast construction materials. The gradient of the total field is generally expected to blow up in the narrow regions between obstacles due to material and geometric irregularities. There are few studies on the Helmholtz system because it presents significant challenges compared to the Laplace system. We plan to explore this problem in future research.
\end{remark}

\section{The main proofs}
In this section, we provide the complete proof of Theorem \ref{mainresult}. Before doing so, we first present some a priori estimates for the inhomogeneous medium scattering problem \eqref{9-1}. These estimates describe the behaviors of the total field inside the inclusion when the mass density and bulk modulus take extreme values. We also address the well-posedness of these obstacle scattering models, particularly those with entirely new boundary conditions.

\subsection{A priori estimates to the medium scattering}

We note that the Sobolev  space $H_{loc}^1(\bbr^N \backslash \overline{\Omega})$ is the space of all functions $f:\bbr^N \backslash \overline{\Omega} \rightarrow \mathbb{C}$ such that $f \in H^1(B_R \backslash \overline{\Omega})$ for all open balls $B_R$ with radius $R$ containing $\overline{\Omega}$.

\begin{lemma}
Let $u_{_{\delta}} \in H_{loc}^1(\bbr^N)$ be the unique solution to the inhomogeneous medium scattering \eqref{9-1} and  $d_0 \in (0,1)$  be a positive number, then the following estimates hold. 
If the parameter  $\delta> d_0^{-1}$, it holds that
\begin{equation}\label{aprior1}
\delta^{-\frac{1}{2}}\|u_{_{\delta}}\|_{H^1(\Omega)} \leqslant C \|u_{_{\delta}}\|_{H^1(B_R\backslash \overline{\Omega})},
\end{equation}
and if $0<\delta< d_0$, it holds that
\begin{equation}\label{aprior2}
\|\nabla u_{_{\delta}}\|_{L^2(\Omega)} \leqslant C \delta^{\frac{1}{2}} \|u_{_{\delta}}\|_{H^1(B_R\backslash \overline{\Omega})} ,
\end{equation}
where $C$ is some positive constant independent of $\delta$. Furthermore, if $0<\delta< d_0$ and $0< |\tau| < d_0$, it holds that
\begin{equation}\label{aprior3}
\|u_{_\delta}\|_{H^1(\Omega)} \leqslant C(\delta + |\tau|)^{\frac{1}{2}} \|u_{_{\delta}}\|_{H^1(B_R\backslash \overline{\Omega})} ,
\end{equation}
where $C$ is some positive constant independent of $\delta$ and $\tau$. Finally for all the possible cases above, the total field $u_{_{\delta}} $ can be controlled by the incident field $u^i$, i.e.,
\begin{align}\label{9-4}
\|u_{_{\delta}}\|_{H^1(B_R\backslash\overline{\Omega})} \leqslant C \|u^i\|_{H^1(B_R\backslash\overline{\Omega})}.
\end{align}
\end{lemma}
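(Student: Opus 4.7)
The plan is to build everything from a single energy identity obtained by testing the interior equation in \eqref{9-1} against $\bar u_{_\delta}$. Integrating by parts on $\Omega$ and applying the transmission conditions yields
\begin{equation*}
\frac{1}{\tilde{\rho}}\int_{\Omega} |\nabla u_{_\delta}|^2 \dx{x} - \omega^2 \int_{\Omega} \frac{1}{\tilde{\kappa}} |u_{_\delta}|^2 \dx{x} = \int_{\partial \Omega} \frac{1}{\rho} \left.\frac{\partial u_{_\delta}}{\partial \nu}\right|_+ \bar{u}_{_\delta}\big|_+ \dx{s}.
\end{equation*}
The right-hand side is a purely exterior quantity and, by the standard trace inequality and interior elliptic regularity for the Helmholtz equation in $B_R\setminus\overline{\Omega}$, is bounded by $C\|u_{_\delta}\|_{H^1(B_R\setminus\overline{\Omega})}^2$.

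I would then split into real and imaginary parts. Because $\tilde{\kappa}=\kappa_0(\alpha-\beta\mathrm{i})$ with $\beta>0$, one has $\mathrm{Im}(\tilde{\kappa}^{-1})=\beta/[\kappa_0(\alpha^2+\beta^2)]$, which is strictly positive. Taking the imaginary part therefore gives
\begin{equation*}
\omega^{2}\frac{\beta}{\kappa_{0}(\alpha^{2}+\beta^{2})}\|u_{_\delta}\|_{L^{2}(\Omega)}^{2} \leqslant C\|u_{_\delta}\|_{H^{1}(B_{R}\setminus\overline{\Omega})}^{2},
\end{equation*}
while the real part furnishes
\begin{equation*}
\frac{1}{\rho\delta}\|\nabla u_{_\delta}\|_{L^{2}(\Omega)}^{2} \leqslant \omega^{2}|\mathrm{Re}(\tilde{\kappa}^{-1})|\,\|u_{_\delta}\|_{L^{2}(\Omega)}^{2} + C\|u_{_\delta}\|_{H^{1}(B_{R}\setminus\overline{\Omega})}^{2}.
\end{equation*}
These two inequalities, together with $\tilde{\rho}=\rho\delta$ and $\tilde{\kappa}=\kappa\tau$, generate every claim of the lemma by reading off the explicit dependence on $\delta$ and $\tau$. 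Case \eqref{aprior1} ($\delta\gtrsim 1$, $\tau$ bounded) gives $\|\nabla u_{_\delta}\|_{L^2(\Omega)}^2\lesssim \delta\|u_{_\delta}\|_{H^1(B_R\setminus\overline{\Omega})}^2$, while the $L^2$ bound above is $\delta$-independent and thus negligible after multiplication by $\delta^{-1/2}$. Case \eqref{aprior2} ($\delta\to 0$, $\tau$ bounded) is identical in nature: $\|\nabla u_{_\delta}\|_{L^2(\Omega)}^2 \lesssim \delta\|u_{_\delta}\|_{H^1(B_R\setminus\overline{\Omega})}^2$. Case \eqref{aprior3} is the delicate one: when $|\tau|\to 0$, the coefficient $1/\kappa_0$ diverges, so the $L^2$ estimate becomes $\|u_{_\delta}\|_{L^2(\Omega)}^2\lesssim \kappa_0\|u_{_\delta}\|_{H^1(B_R\setminus\overline{\Omega})}^2\sim |\tau|\|u_{_\delta}\|_{H^1(B_R\setminus\overline{\Omega})}^2$, and substituting this back into the real-part identity cancels the dangerous $\mathrm{Re}(\tilde{\kappa}^{-1})\sim 1/\kappa_0$ exactly, leaving $\|\nabla u_{_\delta}\|_{L^2(\Omega)}^2\lesssim \delta\|u_{_\delta}\|_{H^1(B_R\setminus\overline{\Omega})}^2$.

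For the uniform bound \eqref{9-4} I would argue by contradiction in the spirit of a Fredholm-alternative/compactness argument. Assume a sequence $(\delta_n,\tau_n)$ along which $\|u_{_{\delta_n}}\|_{H^1(B_R\setminus\overline{\Omega})}/\|u_n^i\|_{H^1(B_R\setminus\overline{\Omega})}\to\infty$; after normalisation one has exterior norm one and incident norm tending to zero. The estimates \eqref{aprior1}--\eqref{aprior3} established above provide uniform control of $u_{_{\delta_n}}$ inside $\Omega$, and coupled with the Sommerfeld condition (encoded via a Dirichlet-to-Neumann map on $\partial B_R$) they yield $H^1_{loc}$ compactness of the sequence. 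A weak limit solves one of the limiting obstacle problems \eqref{9-3}--\eqref{9-2} with zero incident field and vanishing far field, hence vanishes identically by uniqueness of those limit problems (which is the object of the following subsection), contradicting the normalisation.

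The main obstacle will be case \eqref{aprior3}: naively, $\mathrm{Re}(\tilde{\kappa}^{-1})$ blows up like $|\tau|^{-1}$, which would ruin the gradient estimate unless one first extracts the matching $|\tau|$-smallness of $\|u_{_\delta}\|_{L^2(\Omega)}$ from the imaginary part. The crucial mechanism is therefore the damping hypothesis $\beta>0$, which produces a positive lower bound on $\mathrm{Im}(\tilde{\kappa}^{-1})$ of the precisely correct order in $\kappa_0$ to cancel the divergence. A secondary technical point is justifying the trace estimate on $\partial\Omega$ uniformly in $\delta,\tau$; this is handled by interior $H^2$ regularity for the exterior Helmholtz equation, whose constants depend only on $\rho,\kappa,\omega$ and the geometry of $\Omega$.
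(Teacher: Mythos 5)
Your proof of the three a priori estimates follows the same route as the paper's: an energy identity obtained by multiplying the equation by $\overline{u_{_\delta}}$, followed by taking imaginary and real parts to obtain the $L^2$ bound (from the damping $\beta>0$) and the gradient bound respectively, with the cancellation in \eqref{aprior3} occurring exactly as you describe. The only cosmetic difference is that you integrate over $\Omega$ alone and push the resulting boundary term on $\partial\Omega$ into the exterior via the conormal trace, while the paper integrates over both $\Omega$ and $B_R\setminus\overline{\Omega}$ so that the $\partial\Omega$ terms cancel and the boundary term sits on $\partial B_R$; both versions need the standard weak definition of the conormal derivative for $H^1$ Helmholtz solutions, and neither is materially simpler.

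For the uniform bound \eqref{9-4} your route is genuinely different and a little softer than the paper's. You propose a compactness/weak-limit (Fredholm/Rellich) argument: normalise, extract a weak limit, identify the limit as a solution of the appropriate limit problem with zero data, conclude it vanishes by uniqueness, and contradict the normalisation. The paper instead avoids weak limits altogether: using the a priori estimate $\|\nabla v^n\|_{L^2(\Omega)}\leqslant C\delta^{1/2}$ together with Poincar\'e--Wirtinger, it shows the normalised total field satisfies the limit boundary condition of \eqref{d1} up to an error $O(\delta^{1/2})$, then invokes the \emph{quantitative} well-posedness of Lemma~\ref{9-7} to conclude directly that $\|v^n\|_{H^1(B_R\setminus\overline\Omega)}\leqslant C\delta^{1/2}+(1+C)\|v^{i,n}\|<1$ for $\delta$ small and $n$ large. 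Both approaches forward-reference the well-posedness lemma of the following subsection, so there is no circularity either way; but the paper's argument is tighter, as it yields the sharp rate $O(\delta^{1/2})$ for free.

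One point in your sketch that would need to be completed: weak convergence of $v^n$ to $0$ does \emph{not} by itself contradict $\|v^n\|_{H^1(B_R\setminus\overline\Omega)}\equiv 1$; you need strong $H^1$ convergence. You gesture at this by invoking the Dirichlet-to-Neumann map on $\partial B_R$, which is indeed the right device (decompose $\Lambda_R$ into a coercive part plus a compact part and use Rellich to kill the compact part and the $k^2\|v^n\|_{L^2}^2$ term in the exterior energy identity), but as written the step ``they yield $H^1_{loc}$ compactness'' is asserted rather than proved. The paper's quantitative argument sidesteps this entirely, which is the main thing it buys over your version.
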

\begin{proof}
For the Helmholtz system \eqref{9-1}, multiplying by $\overline{u_{_{\delta}}}$ and integrating inside $\Omega$  and outside $B_R \backslash \Omega$ for a sufficient large $R$ containing $\overline{\Omega}$  yield
\begin{align*}
&\int_{\Omega} \nabla \cdot \frac{1}{\tilde{\rho}} \nabla u_{_{\delta}}\overline{u_{_{\delta}}} + \frac{\omega^2}{\tilde{\kappa}} u_{_{\delta}}\overline{u_{_{\delta}}} \dx{x} + \int_{B_R\backslash\overline{\Omega}} \nabla \cdot \frac{1}{\rho} \nabla u_{_{\delta}}\overline{u_{_{\delta}}} + \frac{\omega^2}{\kappa} u_{_{\delta}}\overline{u_{_{\delta}}} \dx{x} \\
=& \int_{\Omega}  -\frac{1}{\tilde{\rho}}|\nabla u_{_{\delta}}|^2+ \frac{\omega^2}{\tilde{\kappa}}|u|^2 \dx{x} + \int_{\partial \Omega}\frac{1}{\tilde{\rho}}\left.\frac{\partial u_{_{\delta}}}{\partial \nu}\right|_-\left.\overline{u_{_{\delta}}}\right|_-\dx{s} + \int_{{B_R\backslash\overline{\Omega}}} -\frac{1}{\rho}|\nabla u_{_{\delta}}|^2+ \frac{\omega^2}{\kappa}|u_{_{\delta}}|^2 \dx{x}  \\
&+ \int_{\partial B_R}\frac{1}{\rho}\frac{\partial u_{_{\delta}}}{\partial \nu}\overline{u_{_{\delta}}}\dx{s} - \int_{\partial \Omega}\frac{1}{\rho}\left.\frac{\partial u_{_{\delta}}}{\partial \nu}\right|_+ \left.\overline{u_{_{\delta}}}\right|_+\dx{s} \\
=&-\int_{\Omega}  \frac{1}{\tilde{\rho}}|\nabla u_{_{\delta}}|^2 - \omega^2(a+\mi b)|u_{_{\delta}}|^2 \dx{x} - \int_{{B_R\backslash\overline{\Omega}}} \frac{1}{\rho}|\nabla u_{_{\delta}}|^2 - \frac{\omega^2}{\kappa}|u_{_{\delta}}|^2 \dx{x} + \int_{\partial B_R}\frac{1}{\rho}\frac{\partial u_{_{\delta}}}{\partial \nu}\overline{u_{_{\delta}}} \dx{s} \\
=& 0  ,
\end{align*}
where $a$ and $b$ are defined as
$$a = \frac{1}{|\tau|}\frac{\alpha}{\kappa\sqrt{\alpha^2+\beta^2}} \coma b =   \frac{1}{|\tau|} \frac{\beta}{\kappa\sqrt{\alpha^2+\beta^2}}. $$
Taking the imaginary part from the above equation, we can get
\begin{align*}
\omega^2 b \int_{\Omega} |u_{_{\delta}}|^2 \dx{x} + \Im\int_{\partial B_R}\frac{1}{\rho}\frac{\partial u_{_{\delta}}}{\partial \nu}\overline{u_{_{\delta}}}\dx{s} = 0.
\end{align*}
Hence, it holds from the Schwartz inequality and trace theorem that
\begin{equation}\label{udelta}
\begin{aligned}
\|u_{_{\delta}}\|_{L^2(\Omega)}^2 &= \frac{1}{\omega^2 b}\Im\int_{\partial B_R}\frac{1}{\rho}\overline{\frac{\partial u_{_{\delta}}}{\partial \nu}}u_{_{\delta}}\dx{s} \leqslant C\frac{1}{\omega^2 b\rho} \|\partial_\nu u_{_{\delta}}\|_{H^{-\frac{1}{2}}(\partial B_R)} \| u_{_{\delta}}\|_{H^{\frac{1}{2}}(\partial B_R)} \\
&\leqslant C\frac{1}{\omega^2 b\rho} \|u_{_{\delta}}\|^2_{H^1(B_R\backslash \overline{\Omega})}.
\end{aligned}
\end{equation}
Similarly, we can get by taking the real part 
\begin{align*}
&-\int_{\Omega}  \frac{1}{\tilde{\rho}}|\nabla u_{_{\delta}}|^2 - \omega^2 a \int_{\Omega}|u_{_{\delta}}|^2 \dx{x} - \int_{{B_R\backslash\overline{\Omega}}} \frac{1}{\rho}|\nabla u_{_{\delta}}|^2 - \frac{\omega^2}{\kappa}|u_{_{\delta}}|^2 \dx{x} + \Re\int_{\partial B_R}\frac{1}{\rho}\frac{\partial u_{_{\delta}}}{\partial \nu}\overline{u_{_{\delta}}}\dx{s} \\
&= 0.
\end{align*}
Direct verification shows that
\begin{align*}
\frac{1}{\delta}\|\nabla u_{_{\delta}}\|_{L^2(\Omega)}^2 &= \omega^2\rho a\int_{\Omega}|u_{_{\delta}}|^2 \dx{x} + \int_{{B_R\backslash\overline{\Omega}}} -|\nabla u_{_{\delta}}|^2+ \frac{\omega^2\rho}{\kappa}|u_{_{\delta}}|^2 \dx{x} + \Re\int_{\partial B_R}\frac{\partial u_{_{\delta}}}{\partial \nu}\overline{u_{_{\delta}}}\dx{s} \\
&=\omega^2\rho a \|u_{_{\delta}}\|^2_{L^2(\Omega)} - \|\nabla u_{_{\delta}}\|^2_{L^2(B_R \backslash \overline{\Omega})} + k^2\| u_{_{\delta}}\|^2_{L^2(B_R \backslash \overline{\Omega})} + \Re\int_{\partial B_R}\frac{\partial u_{_{\delta}}}{\partial \nu}\overline{u_{_{\delta}}}\dx{s}  \\
&\leqslant C\left(\omega^2\rho a\|u_{_{\delta}}\|_{L^2(\Omega)}^2  + \|u_{_{\delta}}\|_{H^1(B_R\backslash \overline{\Omega})}^2 \right).
\end{align*}
where $C$ is some constant only depending on $k,\alpha,\beta, \Omega$.

We are in a position to derive the corresponding asymptotic estimates when the material parameters take extreme values. Firstly if $\delta > d_0^{-1} >1$, it holds that 
\begin{align*}
\frac{1}{\delta}\|u_{_{\delta}}\|_{H^1(\Omega)}^2 &\leqslant C\left(\omega^2\rho a\|u_{_{\delta}}\|_{L^2(\Omega)}^2  + \delta^{-1}\|u_{_{\delta}}\|_{L^2(\Omega)}^2 + \|u_{_{\delta}}\|_{H^1(B_R\backslash \overline{\Omega})}^2 \right) \\
&\leqslant C\|u_{_{\delta}}\|_{H^1(B_R\backslash \overline{\Omega})}^2.
\end{align*}
We obtain the second estimate \eqref{aprior1}.
If $0< \delta < d_0<1$ , then
\begin{equation}\label{boundary}
\|\nabla u_{_{\delta}}\|_{L^2(\Omega)}^2 \leqslant C \delta\left(\frac{\alpha}{\beta}\|u_{_{\delta}}\|_{L^2(\Omega)}^2  + \|u_{_{\delta}}\|_{H^1(B_R\backslash \overline{\Omega})}^2 \right) \leqslant C\delta \|u_{_{\delta}}\|_{H^1(B_R\backslash \overline{\Omega})}^2.
\end{equation}
We obtain the third estimate \eqref{aprior2}. If we further assume  $0< |\tau| < d_0<1$, then
\begin{equation*}
\|u_{_{\delta}}\|_{H^1(\Omega)}^2 = \|u_{_{\delta}}\|_{L^2(\Omega)}^2 + \|\nabla u_{_{\delta}}\|_{L^2(\Omega)}^2 \leqslant C (\delta + |\tau|) \|u_{_{\delta}}\|_{H^1(B_R\backslash \overline{\Omega})}^2.
\end{equation*}
We obtain the last estimate \eqref{aprior3}.

The validity of  estimate \eqref{9-4} is in fact a result of energy conservation. It states that the energy of the total field can be controlled by that of the incident field. We follow the idea presented in \cite{Liu2011} to prove it. Here we choose to consider the case $0< \delta <d_0 <1$ and other cases can be proved in a similar manner.   

Let $\{u^{i,n}\}_{n \in \bbn}$ be a sequence of normalized incident fields satisfying $\|u^{i,n}\|_{H^1(B_R\backslash \overline{\Omega})} =1$. 
Without loss of generality, if \eqref{9-4} does not hold, we assume that the produced total fields $\{ u^n_{_\delta}\}_{n \in \bbn}$ satisfy
$$\|u^n_{_\delta}\|_{H^1(B_R\backslash \overline{\Omega})} \geqslant n \longrightarrow \infty \quad \text{\rm as}  \quad n \rightarrow \infty.$$ 
We claim that this is a contradiction. Let
\begin{equation*}
v^{i,n} = \frac{u^{i,n}}{\|u^n_{_\delta}\|_{H^1(B_R\backslash \overline{\Omega})}} \coma v^{n} = \frac{u^{n}_{\delta}}{\|u^n_{_\delta}\|_{H^1(B_R\backslash  \overline{\Omega})}} \coma  n \in \bbn.
\end{equation*}
Since the system \eqref{9-1} is linear, it holds that $v^{n}$ is the total field to \eqref{9-1} illuminated by the incident field $v^{i,n}$. It is readily observed that
\begin{equation}\label{normal}
\|v^{i,n}\|_{H^1(B_R\backslash \overline{\Omega})} \leqslant \frac{1}{n} \rightarrow 0  \coma \|v^n\|_{H^1(B_R\backslash \overline{\Omega})} \equiv 1 \quad \text{\rm as}  \quad n \rightarrow \infty.
\end{equation}
The completely same argument as \eqref{boundary} yields
\begin{equation*}
\|\nabla v^n\|_{L^2(\Omega)}  \leqslant C\delta^{\frac{1}{2}} \|v^n\|_{H^1(B_R\backslash \overline{\Omega})} = C \delta^{\frac{1}{2}}.
\end{equation*}
By using the transmission boundary condition and Green's formula, we can get
\begin{align*}
&\int_{\partial \Omega} \left.\frac{1}{\rho}\frac{\partial v^n}{\partial \nu}\right|_+ \dx{s}  + \frac{\omega^2}{\tilde{\kappa}} \mathrm{V}(\Omega)v^n|_+ = \int_{\partial \Omega} \left.\frac{1}{\tilde{\rho}}\frac{\partial v^n}{\partial \nu}\right|_- \dx{s}  + \frac{\omega^2}{\tilde{\kappa}} \mathrm{V}(\Omega)v^n|_- \\
=& \frac{\omega^2}{\tilde{\kappa}}\mathrm{V}(\Omega) \left(v^n|_- - \int_\Omega v^n \dx{x}\right) =  \frac{\omega^2}{\tilde{\kappa}}\mathrm{V}(\Omega) (v^n|_- - \tilde{v}_{_\Omega}^n),
\end{align*}
where $\tilde{v}^n_{_\Omega}$ is the integral average of $v^n$ over $\Omega$.
Then taking the Poincar\'e-Wirtinger inequality and trace theorem we can get
\begin{equation*}
\left\|\int_{\partial \Omega} \left.\frac{1}{\rho}\frac{\partial v^n}{\partial \nu}\right|_+ \dx{s}  + \frac{\omega^2}{\tilde{\kappa}} \mathrm{V}(\Omega)v^n|_+ \right\|_{H^{\frac{1}{2}}(\partial \Omega)} \leqslant C\|\nabla v^n\|_{L^2(\Omega)} \leqslant C\delta^{\frac{1}{2}}.
\end{equation*}
Therefore from Lemma \ref{9-7}, there exists a constant $0<d_0 <1$ and sufficiently large $N$ such that for all $n>N$ and $\delta < d_0$,
\begin{equation*}
\|v^n\|_{H^1(B_R\backslash \overline{\Omega})} \leqslant   C\delta^{\frac{1}{2}} + \|v^{i,n}\|_{H^1(B_R\backslash \overline{\Omega})} < 1,
\end{equation*}
which  contradicts \eqref{normal} and hence \eqref{9-4} holds.
\end{proof}

\subsection{The wellposedness to the obstacle scattering}
In this subsection, we prove the well-posedness of the obstacle scattering problems presented in Theorem \ref{mainresult}. The primary mathematical tools employed are layer potential techniques and Riesz-Fredholm theory. The well-posedness results could also be obtained using variational methods for elliptic boundary value problems, which are particularly straightforward for the scattering problems in \eqref{9-222} and \eqref{9-22}. However, to avoid introducing additional mathematical frameworks, we choose to base our analysis entirely on potential theory rather than invoking more general results. The necessary mathematical definitions and operator properties of layer potentials are provided in Appendix \ref{layer}.

The sound-soft \eqref{9-22} and sound-hard \eqref{9-2} obstacle scattering problems are classical and their well-posedness results can be found in many mathematical texts, such as \cite{kress2013linear}. For completeness, we also include the well-posedness results here. In this subsection, our primary focus is on the well-posedness of the two new obstacle scattering models, \eqref{9-3} and \eqref{9-222}. Let $v^s, t^s, w^s$ and $u^s$ denote the scattered fields for the exterior obstacle scattering problems \eqref{9-3}, \eqref{9-222}, \eqref{9-22}, and \eqref{9-2}, respectively. After renaming the unknown functions to represent the scattered fields, we consider a more general case of these direct scattering problems.

\begin{lemma}\label{9-7}
Given a function  $f \in H^{\frac{1}{2}}(\partial \Omega)$ and we consider the following exterior boundary value problem:
\begin{equation}\label{d1}
\begin{cases}
\displaystyle \Delta v^s +k^2v^s = 0 \hspace{68pt} \text{\rm in} \quad \bbr^N \backslash \overline{\Omega},  \\
\displaystyle \int_{\partial \Omega} \frac{1}{\rho}\frac{\partial v^s}{\partial \nu}  + \frac{\omega^2}{\tilde{\kappa}} \mathrm{V}(\Omega)v^s = f \quad \text{\rm on} \quad  \partial \Omega, \\
\displaystyle  v^s \quad \text{\rm satisfies the Sommerfeld radiation condition}.
\end{cases}
\end{equation}
 Then there exists a unique solution $v^s \in {H^1_{loc}(\bbr^N \backslash \overline{\Omega})}$ and the solution satisfies
\begin{equation*}
\|v^s\|_{H^1_{loc}(\bbr^N \backslash \overline{\Omega})}  \leqslant C \|f\|_{H^{\frac{1}{2}}(\partial \Omega)},
\end{equation*}
 where  $C$ is some constant only depending on $k , \tilde{\kappa}$ and $\Omega$.
\end{lemma}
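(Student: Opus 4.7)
My plan is to reduce the nonlocal problem \eqref{d1} to an ordinary exterior Dirichlet problem for the Helmholtz equation plus one scalar linear equation for a complex parameter. The starting observation is that $\int_{\partial\Omega}\frac{1}{\rho}\partial_\nu v^s\dx{s}$ is a constant (independent of boundary points), so the boundary condition forces
\begin{equation*}
v^s|_{\partial\Omega}=\frac{\tilde{\kappa}}{\omega^2\mathrm{V}(\Omega)}f-\xi
\end{equation*}
for some $\xi\in\mathbb{C}$ yet to be determined; once $\xi$ is fixed, $v^s$ is uniquely recovered as the radiating solution of the exterior Dirichlet problem with this prescribed trace.

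To make this explicit, let $v_f\in H^1_{loc}(\bbr^N\setminus\overline{\Omega})$ denote the unique radiating solution to the exterior Dirichlet problem with boundary data $\tilde{\kappa}f/(\omega^2\mathrm{V}(\Omega))$, and let $v_0$ be the corresponding solution with constant boundary value $-1$. Their existence, uniqueness, and $H^{1/2}\to H^1_{loc}$ stability follow from classical combined-layer-potential and Riesz--Fredholm arguments (cf.\ \cite{kress2013linear}). Setting $v^s=v_f+\xi v_0$ and substituting into the nonlocal condition in \eqref{d1}, the $f$-terms cancel and one is left with the scalar equation
\begin{equation*}
\xi\left(\int_{\partial\Omega}\frac{1}{\rho}\frac{\partial v_0}{\partial\nu}\dx{s}-\frac{\omega^2\mathrm{V}(\Omega)}{\tilde{\kappa}}\right)=-\int_{\partial\Omega}\frac{1}{\rho}\frac{\partial v_f}{\partial\nu}\dx{s}.
\end{equation*}
Existence, uniqueness, and the $H^1_{loc}$ bound for $v^s$ therefore reduce to showing that the bracketed prefactor of $\xi$ is non-zero.

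The non-vanishing of that prefactor is equivalent to uniqueness for the homogeneous problem ($f\equiv 0$), and this is the main technical point. Assuming $f\equiv 0$, the boundary condition forces $v^s|_+\equiv c$ for some constant $c$ together with $\int_{\partial\Omega}\frac{1}{\rho}\partial_\nu v^s\dx{s}=-\omega^2\mathrm{V}(\Omega)c/\tilde{\kappa}$. Applying Green's first identity on $B_R\setminus\overline{\Omega}$ and using that the volume integral $\int(|\nabla v^s|^2-k^2|v^s|^2)\dx{x}$ is real, the imaginary part yields
\begin{equation*}
\Im\int_{\partial B_R}\frac{\partial v^s}{\partial\nu}\overline{v^s}\dx{s}+\rho\omega^2\mathrm{V}(\Omega)\,\Im\!\bigl(\tilde{\kappa}^{-1}\bigr)\,|c|^2=0.
\end{equation*}
Under the absorption hypothesis $\beta>0$ one has $\Im(\tilde{\kappa}^{-1})=\beta/(\kappa_0(\alpha^2+\beta^2))>0$, while the Sommerfeld radiation condition forces $\liminf_{R\to\infty}\Im\!\int_{\partial B_R}\partial_\nu v^s\overline{v^s}\dx{s}\geqslant 0$. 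Both non-negative terms must vanish, which gives $c=0$ and $\int_{\partial B_R}|v^s|^2\dx{s}\to 0$, whence Rellich's lemma combined with unique continuation yields $v^s\equiv 0$.

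Once uniqueness is in hand, the scalar equation for $\xi$ is uniquely solvable, and the bound $\|v^s\|_{H^1_{loc}(\bbr^N\setminus\overline{\Omega})}\leqslant C\|f\|_{H^{1/2}(\partial\Omega)}$ follows from the explicit expression for $\xi$, the stability of the two exterior Dirichlet problems for $v_f$ and $v_0$, and the trace theorem applied to $\partial_\nu v_f$. The hard part, as expected, is the uniqueness step: the absorption coefficient $\beta>0$ enters essentially to produce a strictly positive contribution of $|c|^2$ in the imaginary-part identity, which is what rules out the would-be eigenvalue $\omega^2\mathrm{V}(\Omega)/\tilde{\kappa}$ of the associated Dirichlet-to-Neumann-type functional.
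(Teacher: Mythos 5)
Your decomposition $v^s = v_f + \xi v_0$ --- reducing the nonlocal boundary condition to two exterior Dirichlet problems plus a single scalar equation in $\xi$ --- is correct, and the uniqueness argument via Green's identity, the radiation condition, and the absorption hypothesis $\Im(\tilde\kappa^{-1})>0$ is sound. However, this is a genuinely different route from the paper's own proof of this lemma. The paper represents $v^s$ directly as a combined single-/double-layer potential $\int_{\partial\Omega}(-\mi\iota\Phi + \partial_\nu\Phi)\psi$, derives a boundary integral equation of the form $(I-\mi\iota S + K + F)\psi = 2b_0\rho f$ in which the nonlocal term is packaged as a compact (essentially rank-one) perturbation $F$, proves injectivity by the same kind of Rellich/absorption argument you use, and then invokes Riesz--Fredholm together with layer-potential regularity for the stability bound. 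Your approach is in fact structurally the same as what the paper does for the companion result, Lemma~\ref{33}, where the unknown constant Dirichlet value is isolated, the solution is split into two exterior Dirichlet problems, and a scalar solvability condition is verified to be nondegenerate by a radiation-condition/Rellich argument. So your route has the virtue of treating both new obstacle models (\eqref{9-3} and \eqref{9-222}) under one pattern --- Dirichlet reduction plus scalar solvability --- and of avoiding the jump-relation and compactness bookkeeping, at the modest cost of taking exterior Dirichlet well-posedness (which the paper proves separately as Lemma~\ref{dirichlet}) as a black box. One small remark: after Rellich's lemma gives $v^s\equiv 0$ outside a large ball, the extension to all of $\bbr^N\setminus\overline\Omega$ uses real-analyticity of Helmholtz solutions rather than ``unique continuation'' per se, but the conclusion is the same.
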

\begin{proof}
By the layer-potential technique, the solution to  \eqref{d1} can be represented as the summation of single- and double-layer operators:
\begin{equation}\label{solution}
v^s =  \int_{\partial \Omega} -i\iota \Phi(x,y)\psi(y) + \frac{\partial \Phi(x,y)}{\partial \nu } \psi(y) \dx{s}  \coma x \in\bbr^N \backslash \Omega,
\end{equation}
where $\iota >0$ is some positive constant and $\psi \in H^{\frac{1}{2}}(\partial \Omega)$. we define $b_0^{-1}: = \frac{\omega^2\rho}{\tilde{\kappa}}\mathrm{V}(\Omega)$ for simplicity. $w^s$  satisfies the boundary condition and by the jump relation (see Appendix \ref{layer}) it holds that
\begin{align}\label{integral}
\psi - \mi\iota S\psi  + K\psi  +b_0\langle\mi \iota(I-K^*)\psi +T\psi,1\rangle_{\partial \Omega} &= 2b_0\rho f,
\end{align}
where $\langle \cdot , \cdot\rangle_{\partial \Omega}$ is the dual product in $H^{-\frac{1}{2}}(\partial \Omega) \times H^{\frac{1}{2}}(\partial \Omega) $.

we know that $S,K ,K^*: H^{ \frac{1}{2}}(\partial \Omega) \rightarrow  H^{ \frac{1}{2}}(\partial \Omega)$ is compact  and $T$ is bounded from $H^{\frac{1}{2}}(\partial \Omega)$ to $H^{-\frac{1}{2}}(\partial \Omega)$.  We need some analysis for the dual product $\langle \cdot , \cdot\rangle_{\partial \Omega}$. For $\langle T\psi , 1 \rangle$, we can view $1|_{\partial \Omega}$ be a function belonging to  $H^{\frac{1}{2}}(\partial \Omega)$ since $\partial \Omega$ is a compact set and  $\langle T\psi , 1 \rangle$ is well-defined. For any function $\phi \in  H^{-\frac{1}{2}}(\partial \Omega)$, the operator $G: H^{-\frac{1}{2}}(\partial \Omega) \rightarrow \mathbb{C}$ defined by $G\phi := \langle\phi,1 \rangle$ is  a rank-one operator and hence is a compact operator. By following the similar manner, it is also true for $I$ and $K^*$ since they are bounded from $H^{ \frac{1}{2}}(\partial \Omega)$ to $H^{ \frac{1}{2}}(\partial \Omega)$. We define $F\psi := b_0\langle\mi \iota(I-K^*)\psi +T\psi,1\rangle_{\partial \Omega}$ and $F$ is a compact operator from $ H^{\frac{1}{2}}(\partial \Omega)$ to  $ H^{\frac{1}{2}}(\partial \Omega)$. The existence of solution to \eqref{d1} can be established by the Riesz-Fredholm theory for equations with a compact operator.

We first prove the uniqueness by Rellich lemma. when $f =0$, then
\begin{equation*}
\Im \int_{\partial \Omega} v^s \frac{\partial \overline{v^s}}{\partial \nu} = \Im \int_{\partial \Omega}-b_0 \int_{\partial \Omega}\frac{\partial v^s}{\partial \nu}  \frac{\partial \overline{v^s}}{\partial \nu} = \Im \left( - b_0 \left|\int_{\partial \Omega} \frac{\partial v^s}{\partial \nu} \right|^2 \right) \geqslant 0,
\end{equation*}
where we have used the fact 
$$ \Im b_0=\frac{1}{\omega^2\rho\mathrm{V}(\Omega)} \Im \tilde{\kappa} =\frac{\kappa_0}{\omega^2\rho\mathrm{V}(\Omega)}\frac{-\beta}{\alpha^2+\beta^2} \leqslant 0.$$
Hence $v^s = 0$ in $\bbr^3 \backslash \overline{\Omega}$.

When $f = 0$, the uniqueness shows that $v^s|_+ =0$.  We  from the jump relation obtain that
\begin{equation*}
v^s|_- = - \psi \coma \frac{\partial v^s}{\partial \nu} = -\mi \iota \psi
\end{equation*}
and  hence
\begin{equation*}
\mi \iota \int_{\partial \Omega} \psi^2 = \int_{\partial \Omega} \overline{v^s}|_- \frac{\partial v^s|_-}{\partial \nu} =\int_{\Omega} |\nabla v^s|^2 -k^2|v^s|^2.
\end{equation*}
Taking the imaginary part shows that $\psi = 0$. Thus we have established the uniqueness for the integral equation \eqref{integral}, i.e., injectivity of the operator $I-\mi\iota S + K + F: H^{ \frac{1}{2}}(\partial \Omega) \rightarrow  H^{ \frac{1}{2}}(\partial \Omega)$. Therefore by the Riesz-Fredholm theory, $I-\mi\iota S + K + F$ is bijective and the inverse $(I-\mi\iota S + K + F)^{-1}$ is bounded. The inhomogeneous equation \eqref{integral} possesses a solution and the solution depends on $2b_0\rho f$ in the $H^{\frac{1}{2}}(\partial\Omega)$ norm. From the representation \eqref{solution} of the solution as a combined double- and single-layer potential, with the aid of the regularity estimates in layer-potential theory, the $H^\frac{1}{2}$ dependence of the density $\psi$ on the boundary data $f$ shows that the problem  \eqref{d1} is well-posed, i.e. the $H^1_{loc}(\bbr^N \backslash \overline{\Omega})$ of the solution to \eqref{d1} can be controlled by the $H^{\frac{1}{2}}(\partial \Omega)$ of boundary data $f$. Therefore,
\begin{equation*}
\|v^s\|_{H^1_{loc}(\bbr^N\backslash \overline{\Omega})} \leqslant C\|\psi\|_{H^\frac{1}{2}(\partial \Omega)}  \leqslant C \|f\|_{H^\frac{1}{2}(\partial \Omega)}.
\end{equation*}
We have finished the proof.
\end{proof}

\begin{lemma}\label{33}
Given a function  $d \in H^{\frac{1}{2}}(\partial \Omega)$ and we consider the following exterior boundary value problem:
\begin{equation} \label{tss}
\begin{cases}
\displaystyle \Delta t^s + k^2t^s = 0 \hspace{16pt} \textit{\rm in} \quad \bbr^N \backslash \overline{\Omega}, \\
\displaystyle t^s =  d + C_0 \hspace{31pt} \textit{\rm on} \quad  \partial \Omega, \\
\displaystyle \int_{\partial \Omega} \frac{\partial t^s}{\partial \nu} \dx{s} = a_0 \quad \textit{\rm on}\quad  \partial \Omega, \\
\displaystyle t^s \quad  \text{\rm satisfies the Sommerfeld radiation condition},
\end{cases}
\end{equation}
where  $C_0$ is some unknown constant determined by the integral of normal derivative.
There exists a unique solution $t^s$ and the solution satisfies
\begin{equation*}
\|t^s\|_{H^1_{loc}(\bbr^N \backslash \overline{\Omega})} \leqslant C \left( \|d\|_{H^{\frac{1}{2}}(\partial \Omega)} + a_0 \right),
\end{equation*}
where $C$ is some constant only depending on $k,\Omega$.
\end{lemma}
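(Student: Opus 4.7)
The plan is to reduce \eqref{tss} to the standard exterior Dirichlet (sound-soft) scattering problem plus a single scalar equation determining the unknown constant $C_0$. First, by the classical well-posedness of the exterior Dirichlet problem for the Helmholtz equation in $H^1_{loc}(\bbr^N \setminus \overline{\Omega})$ (which is obtained by a combined single- and double-layer potential ansatz together with Riesz-Fredholm theory, parallel to the argument in Lemma \ref{9-7}; see also \cite{kress2013linear}), I would introduce two auxiliary radiating fields: $v_1$, the unique radiating solution with $v_1|_{\partial\Omega} = d$, and $v_2$, the unique radiating solution with $v_2|_{\partial\Omega} = 1$. Both obey continuity estimates $\|v_1\|_{H^1_{loc}(\bbr^N \setminus \overline{\Omega})} \leqslant C \|d\|_{H^{1/2}(\partial\Omega)}$ and $\|v_2\|_{H^1_{loc}(\bbr^N \setminus \overline{\Omega})} \leqslant C$.

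Any solution of \eqref{tss} can then be represented as $t^s = v_1 + C_0 v_2$: this ansatz automatically realizes the Helmholtz equation, the boundary condition $t^s|_{\partial\Omega} = d + C_0$, and the Sommerfeld radiation condition, so the flux constraint collapses to the single scalar equation
\begin{equation*}
\gamma\, C_0 = a_0 - \int_{\partial\Omega} \frac{\partial v_1}{\partial\nu} \dx{s}, \qquad \gamma := \int_{\partial\Omega} \frac{\partial v_2}{\partial\nu} \dx{s}.
\end{equation*}
Existence and uniqueness for $(t^s, C_0)$ thus boil down to showing $\gamma \neq 0$.

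The main obstacle is precisely this non-degeneracy. I would first establish uniqueness for \eqref{tss} directly: if $d = 0$ and $a_0 = 0$, applying Green's identity to $t^s$ and $\overline{t^s}$ on $B_R \setminus \overline{\Omega}$ and noting that the exterior trace of $t^s$ on $\partial\Omega$ is the constant $C_0$ while its flux vanishes, the boundary contribution on $\partial\Omega$ reduces to $\overline{C_0}\int_{\partial\Omega}\partial_\nu t^s \dx{s} = 0$. Taking imaginary parts yields $\Im \int_{\partial B_R}\overline{t^s}\partial_\nu t^s \dx{s} = 0$, so Rellich's lemma combined with the Sommerfeld radiation condition gives $t^s \equiv 0$ in $\bbr^N \setminus \overline{\Omega}$. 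Now if $\gamma = 0$, then $v_2$ itself would be an admissible solution of \eqref{tss} with $d = 0$, $a_0 = 0$ and constant $C_0 = 1$, whence $v_2 \equiv 0$ by the uniqueness just proved---contradicting $v_2|_{\partial\Omega} = 1$. Therefore $\gamma \neq 0$ and $C_0 = \gamma^{-1}\bigl(a_0 - \int_{\partial\Omega}\partial_\nu v_1 \dx{s}\bigr)$ is uniquely determined.

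The quantitative estimate then follows by assembling the pieces: continuity of the normal-trace map $H^1_{loc}(\bbr^N\setminus\overline{\Omega}) \to H^{-1/2}(\partial\Omega)$, together with the fact that $1|_{\partial\Omega} \in H^{1/2}(\partial\Omega)$ since $\partial\Omega$ is compact, controls the flux integrals by $\|v_j\|_{H^1_{loc}}$, which are in turn bounded by the Dirichlet data. Consequently $|C_0| \leqslant C\bigl(a_0 + \|d\|_{H^{1/2}(\partial\Omega)}\bigr)$, and the triangle inequality yields $\|t^s\|_{H^1_{loc}(\bbr^N \setminus \overline{\Omega})} \leqslant \|v_1\|_{H^1_{loc}} + |C_0|\|v_2\|_{H^1_{loc}} \leqslant C\bigl(\|d\|_{H^{1/2}(\partial\Omega)} + a_0\bigr)$, as required.
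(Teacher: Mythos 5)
Your proof is correct and follows essentially the same route as the paper's: you decompose $t^s = v_1 + C_0 v_2$ with $v_1, v_2$ the radiating Dirichlet solutions for data $d$ and $1$, reduce the flux constraint to a scalar equation for $C_0$, and show the denominator $\gamma = \int_{\partial\Omega}\partial_\nu v_2\,\dx{s}$ is nonzero by a Rellich-lemma argument. The only cosmetic difference is ordering: the paper proves $\gamma \neq 0$ directly by applying the Rellich argument to $v_2$ itself, while you first establish uniqueness for the full system \eqref{tss} and then deduce $\gamma \neq 0$ as a corollary; the underlying computation (the boundary term on $\partial\Omega$ vanishes because the trace is constant and the net flux is zero) is identical.
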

\begin{proof}
We can separate the Dirichlet problem \eqref{tss} into two parts: $t^s := t_1^s +t_2^s$ where they are the unique solutions of the following systems:
\begin{equation*} 
\begin{cases}
\displaystyle \Delta t_1^s + k^2t_1^s = 0 \hspace{16pt} \textit{\rm in} \quad \bbr^N \backslash \overline{\Omega}, \\
\displaystyle t_1^s =  d  \hspace{58pt} \textit{\rm on} \quad \partial \Omega, \\
\displaystyle t_1^s \quad  \text{\rm satisfies the Sommerfeld radiation condition}.
\end{cases}
\end{equation*}
and
\begin{equation}\label{diri}
\begin{cases}
\displaystyle \Delta t_2^s + k^2t_2^s = 0 \hspace{78pt} \textit{\rm in} \quad \bbr^N \backslash \overline{\Omega}, \\
\displaystyle t^s_2 =   C_0 \hspace{113pt} \textit{\rm on} \quad  \partial \Omega, \\
\displaystyle \int_{\partial \Omega} \frac{\partial t_2^s}{\partial \nu} \dx{s} = a_0 - \int_{\partial \Omega} \Lambda t_1^s \dx{s} \quad \textit{\rm on}\quad  \partial \Omega, \\
\displaystyle t_2^s \quad  \text{\rm satisfies the Sommerfeld radiation condition}.
\end{cases}
\end{equation}
Here $\Lambda: H^{\frac{1}{2}}(\partial \Omega) \rightarrow H^{-\frac{1}{2}}(\partial \Omega)$ is the Dirichlet-to-Neumann map and $\Lambda$  is bounded \cite{Colton2019}. From the lemma \ref{dirichlet}, it holds that
\begin{equation*}
\|t_1^s\|_{H^1_{loc}(\bbr^N \backslash \overline{\Omega})} \leqslant C \|d\|_{H^{\frac{1}{2}}(\partial \Omega)}.
\end{equation*}
Since the system \eqref{diri} is a linear Helmholtz system with constant Dirichlet value $C$, we can consider the obstacle scattering system with Dirichlet value of one:
\begin{equation*}
\begin{cases}
\displaystyle \Delta t_3^s + k^2t_3^s = 0 \quad \textit{\rm in} \quad \bbr^N \backslash \overline{\Omega}, \\
\displaystyle t^s_3 =   1 \hspace{53pt} \textit{\rm on} \quad  \partial \Omega, \\
\displaystyle t_3^s \quad  \text{\rm satisfies the Sommerfeld radiation condition}.
\end{cases}
\end{equation*}
Then $t_2^s := C_0 t_3^s$ must be the unique solution to \eqref{diri}. By the wellposedness of $t_3^s$, there exists a nonzero solution $t_3^s \in H_{loc}^1 (\bbr^N \backslash \overline{\Omega})$. Then
\begin{equation*}
C_0 = \frac{a_0 - \int_{\partial \Omega} \Lambda t_1^s \dx{s}}{\int_{\partial \Omega} \Lambda t_3^s \dx{s} } := \frac{b_0}{c_0}. 
\end{equation*}
We now prove $c_0$ is nonzero by contradiction. We take $R$ large enough such that $\Omega$ is contained in $B_R$ and the Sommerfeld radiation condition yields
\begin{equation*}
\lim_{R\rightarrow \infty}\int_{\partial B_R} \left|\frac{\partial t^s_{3} }{\partial \nu}-\mi k  t^s_{3}\right|^2 \dx{s} = \lim_{R\rightarrow \infty}\int_{\partial B_R} \left|\frac{\partial t^s_{3} }{\partial \nu}\right|^2 + k^2 |t_3^s|^2 + 2k\Im \left( t^s_{3}\overline{\frac{\partial t^s_{3} }{\partial \nu}}\right) \dx{s} \rightarrow 0.
\end{equation*}
Applying Green's theorem in $B_R \backslash \overline{\Omega}$ for $t_3^s$ obtain
\begin{equation*}
\int_{\partial B_R} t^s_{3} \overline{\frac{\partial t^s_{3} }{\partial \nu}} \dx{s} - \int_{\partial \Omega} t^s_{3} \overline{\frac{\partial t^s_{3} }{\partial \nu}} \dx{s} = \int_{B_R \backslash \overline{\Omega}} k^2 |t_3^s|^2 + |\nabla t_3^s|^2 \dx{x}.
\end{equation*}
Taking the imaginary part, we can get
\begin{align*}
\lim_{R\rightarrow \infty}\int_{\partial B_R} \left|\frac{\partial t^s_{3} }{\partial \nu}\right|^2 + k^2 |t_3^s|^2 = \lim_{R\rightarrow \infty} \int_{\partial B_R}  2k\Im \left( t^s_{3}\overline{\frac{\partial t^s_{3} }{\partial \nu}}\right) \dx{s} =  \int_{\partial \Omega}  2k\Im \left( t^s_{3}\overline{\frac{\partial t^s_{3} }{\partial \nu}}\right) \dx{s}.
\end{align*}
Since $t_3^s$ is constant on the boudnary of $\Omega$ and the integral of  normal derivative is zero, it holds that
\begin{equation*}
\lim_{R\rightarrow \infty}\int_{\partial B_R}|t_3^s|^2 \dx{s} = 0.
\end{equation*} 
By the Rellich lemma, we can obtain $t_3^s = 0$ in $B_R \backslash \overline{\Omega}$, which is in contradiction with the nonzero property of assumption to  $t_3^s$. Hence it holds that
\begin{equation*}
 c_0  = \int_{\partial \Omega} \Lambda t_3^s \dx{s} \neq 0.
\end{equation*}
and 
\begin{equation*}
C_0 = \frac{a_0 - \int_{\partial \Omega} \Lambda t_1^s \dx{s}}{\int_{\partial \Omega} \Lambda t_3^s \dx{s} } \leqslant C \left(a_0 + \|d\|_{H^{\frac{1}{2}}(\partial \Omega)} \right),
\end{equation*}
where $C$ is some constant only depending on $k$ and $\Omega$.
\end{proof}

\begin{lemma}\label{dirichlet}
Consider the following Dirichlet problem:
\begin{equation*}
\begin{cases}
\displaystyle \Delta w^s + k^2w^s = 0 \hspace{26pt} \text{\rm in} \quad \bbr^N \backslash \overline{\Omega}, \\
\displaystyle w^s = h \in H^{\frac{1}{2}}(\partial \Omega) \quad \text{\rm on} \quad  \partial \Omega, \\
\displaystyle w^s \quad  \text{\rm satisfies the Sommerfeld radiation condition}.
\end{cases}
\end{equation*}
There exists a unique solution $w^s$ and the solution satisfies
\begin{equation*}
\|w^s\|_{H^1_{loc}(\bbr^N \backslash \overline{\Omega})} \leqslant C \|h\|_{H^{\frac{1}{2}}(\partial \Omega)}.
\end{equation*}
where $C$ is some constant depends on $k,\Omega$.
\end{lemma}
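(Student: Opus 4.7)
The plan is to prove well-posedness of the exterior Dirichlet problem via the standard combined layer-potential approach, which parallels the argument already carried out in Lemma \ref{9-7} for the modified problem \eqref{d1}.

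First I would seek the scattered field in the form of a combined double- and single-layer potential,
\begin{equation*}
w^s(\mb{x}) = \int_{\partial\Omega} \frac{\partial \Phi(\mb{x},\mb{y})}{\partial \nu(\mb{y})}\varphi(\mb{y}) \dx{s(\mb{y})} - \mi\eta \int_{\partial\Omega} \Phi(\mb{x},\mb{y})\varphi(\mb{y}) \dx{s(\mb{y})}, \qquad \mb{x}\in\bbr^N\setminus\overline{\Omega},
\end{equation*}
with some density $\varphi \in H^{\frac{1}{2}}(\partial\Omega)$ and a fixed coupling parameter $\eta>0$. By construction $w^s$ satisfies the Helmholtz equation in $\bbr^N\setminus\overline{\Omega}$ and the Sommerfeld radiation condition. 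Applying the jump relations recalled in Appendix \ref{layer} then converts the Dirichlet boundary trace into the boundary integral equation
\begin{equation*}
\left(\tfrac{1}{2}I + K - \mi\eta S\right)\varphi = h \quad \text{on}\quad \partial\Omega,
\end{equation*}
where $S$ and $K$ are the single-layer and double-layer boundary operators associated with the fundamental solution $\Phi$ of the Helmholtz equation.

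Next I would invoke Riesz-Fredholm theory. Since $\partial\Omega$ is $C^{2,\alpha}$, the operators $S,K:H^{\frac{1}{2}}(\partial\Omega)\to H^{\frac{1}{2}}(\partial\Omega)$ are both compact, so $\tfrac{1}{2}I + K - \mi\eta S$ is a compact perturbation of $\tfrac{1}{2}I$ and the Fredholm alternative applies. To establish injectivity, suppose $h=0$ and derive uniqueness for the BVP by a Rellich-type argument: applying Green's identity on $B_R\setminus\overline{\Omega}$, taking imaginary parts, and passing $R\to\infty$ using the Sommerfeld radiation condition as in the proof of Lemma \ref{33} forces $\lim_{R\to\infty}\int_{\partial B_R}|w^s|^2\dx{s} = 0$, so Rellich's lemma together with unique continuation yields $w^s \equiv 0$ in $\bbr^N\setminus\overline{\Omega}$. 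Then, exactly as in the final paragraph of the proof of Lemma \ref{9-7}, the jump relations across $\partial\Omega$ give $w^s|_- = -\varphi$ and $\partial_\nu w^s|_- = \mi\eta\varphi$, and a second application of Green's formula combined with taking the imaginary part produces $\varphi = 0$; the choice $\eta>0$ is essential here so as to make the imaginary part have a definite sign.

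Injectivity together with the Fredholm alternative then gives bijectivity of $\tfrac{1}{2}I + K - \mi\eta S$ on $H^{\frac{1}{2}}(\partial\Omega)$ and a bounded inverse, so the density $\varphi$ exists and satisfies $\|\varphi\|_{H^{\frac{1}{2}}(\partial\Omega)}\leqslant C\|h\|_{H^{\frac{1}{2}}(\partial\Omega)}$. The standard mapping properties of layer potentials from $H^{\frac{1}{2}}(\partial\Omega)$ into $H^1_{loc}(\bbr^N\setminus\overline{\Omega})$ finally yield the estimate
\begin{equation*}
\|w^s\|_{H^1_{loc}(\bbr^N\setminus\overline{\Omega})} \leqslant C\|\varphi\|_{H^{\frac{1}{2}}(\partial\Omega)} \leqslant C\|h\|_{H^{\frac{1}{2}}(\partial\Omega)}.
\end{equation*}
The main obstacle in this scheme is the uniqueness step: a pure single- or double-layer ansatz would fail at the interior Dirichlet or Neumann eigenvalues of $-\Delta$ in $\Omega$, and it is precisely the combined ansatz with coupling parameter $\mi\eta$ that is designed to circumvent this issue while keeping both the exterior radiation argument and the interior jump-based argument compatible.
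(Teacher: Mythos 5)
Your proof is correct and follows essentially the same route as the paper: the paper also seeks $w^s$ as a combined double-minus-$\mi\zeta$-times-single-layer potential and simply notes that the argument carries over from Lemma~\ref{9-7}, which is precisely the Fredholm-plus-Rellich reasoning you spelled out. (The only discrepancy is a harmless one of sign convention: with the paper's choice $\Phi = -e^{\mi k r}/(4\pi r)$ the integral equation reads $(-\tfrac{1}{2}I + K - \mi\eta S)\varphi = h$ rather than $(\tfrac{1}{2}I + K - \mi\eta S)\varphi = h$, but this does not affect the Fredholm structure or the injectivity argument.)
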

\begin{proof}
The uniqueness is given by the Rellich lemma since $w^s$ is identically zero on $\partial \Omega$.
We seek the solutions defined as
\begin{equation*}
w^s =  \int_{\partial \Omega} \left(\frac{\partial \Phi (x,y)}{\partial \nu_y} - \mi \zeta \Phi(x,y) \right)\psi (y) \dx{s_y} \coma x \in \bbr^N \backslash \overline{\Omega},
\end{equation*}
where $\psi \in H^{\frac{1}{2}}(\partial \Omega)$ and $\zeta$ is a real nonzero coupling parameter. 
We note that $w^s$ shares the same mathematical representation as $v^s$ and a similar argument as Lemma \ref{9-7} yields the wellposedness results.
\end{proof}
\begin{lemma}\label{9-6}
Given a function  $g \in H^{-\frac{1}{2}}(\partial \Omega)$ and we consider the following exterior boundary value problem:
\begin{equation*}
\begin{cases}
\displaystyle \Delta u^s + k^2u^s = 0 \quad \text{\rm in} \quad \bbr^N \backslash \overline{\Omega}, \\
\displaystyle \frac{\partial u^s}{\partial \nu} = g \hspace{46pt} \text{\rm on} \quad  \partial \Omega, \\
\displaystyle u^s \quad  \text{\rm satisfies the Sommerfeld radiation condition}.
\end{cases}
\end{equation*}
There exists a unique solution $u^s \in H^1_{loc}(\bbr^N \backslash \overline{\Omega}) $ and the solution satisfies
\begin{equation*}
\|u^s\|_{H^1_{loc}(\bbr^N \backslash \overline{\Omega})} \leqslant C \|g\|_{H^{-\frac{1}{2}}(\partial \Omega)},
\end{equation*}
where $C$ is some constant only depending on $k$ and $\Omega$.
\end{lemma}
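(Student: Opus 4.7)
The plan is to parallel the combined-layer-potential strategy of Lemma~\ref{9-7}, adapted to the exterior Neumann trace. I would first settle uniqueness: if $g\equiv 0$, apply Green's identity on $B_R \setminus \overline{\Omega}$, take imaginary parts, and combine with the Sommerfeld radiation condition to force $\lim_{R\to\infty}\int_{\partial B_R}|u^s|^2\dx{s}=0$; Rellich's lemma then gives $u^s\equiv 0$ throughout the exterior.

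For existence I would seek $u^s$ as a modified combined layer potential
\begin{equation*}
u^s(x) = \int_{\partial \Omega} \Phi(x,y)\psi(y)\dx{s_y} + \mi\eta \int_{\partial \Omega} \frac{\partial \Phi(x,y)}{\partial \nu_y}(S_0^2\psi)(y) \dx{s_y},\qquad x \in \bbr^N \setminus \overline{\Omega},
\end{equation*}
where $\psi \in H^{-\frac{1}{2}}(\partial \Omega)$, $\eta \in \bbr\setminus\{0\}$ is a coupling parameter, and $S_0$ denotes the Laplace single-layer operator, inserted as a smoother to offset the hypersingularity of the normal derivative of the double layer. Applying the exterior Neumann trace and invoking the jump relations translates the boundary condition into the boundary integral equation
\begin{equation*}
\left(-\tfrac{1}{2}I + K^* + \mi\eta\, T\circ S_0^2\right)\psi = g \quad \text{in } H^{-\frac{1}{2}}(\partial \Omega),
\end{equation*}
in which $K^* : H^{-\frac{1}{2}}(\partial \Omega)\to H^{-\frac{1}{2}}(\partial \Omega)$ is compact and, crucially, $T\circ S_0^2: H^{-\frac{1}{2}}(\partial \Omega)\to H^{-\frac{1}{2}}(\partial \Omega)$ is also compact thanks to the two-order regularity gain provided by $S_0^2$. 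Hence the equation is of Fredholm second kind in $H^{-\frac{1}{2}}(\partial \Omega)$.

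To invoke Riesz-Fredholm, I would establish injectivity of the operator on the left. If its right-hand side vanishes, the exterior uniqueness proven above gives $u^s|_+ \equiv 0$; the jump relations then read off an interior homogeneous problem for $u^s|_-$ whose only solution, upon taking imaginary parts of the resulting energy identity and choosing $\eta$ appropriately as in Lemma~\ref{9-7}, is the trivial one. This forces $\psi = 0$. Riesz-Fredholm theory then delivers a bounded inverse, giving $\|\psi\|_{H^{-\frac{1}{2}}(\partial\Omega)} \leqslant C\|g\|_{H^{-\frac{1}{2}}(\partial\Omega)}$, and standard layer-potential regularity upgrades this to the stated $H^1_{loc}$ bound on $u^s$. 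The main obstacle is the hypersingular operator $T$ in the Neumann trace of the double-layer, which is bounded but not compact; composing with the smoothing $S_0^2$ restores compactness and allows the Riesz-Fredholm framework of Lemma~\ref{9-7} to transfer essentially verbatim.
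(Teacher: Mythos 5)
Your overall strategy coincides with the paper's: the same modified combined single-/double-layer potential with the Laplace smoother $S_0^2$ inserted into the double-layer density, the same observation that $T\circ S_0^2$ gains two orders of Sobolev regularity and is therefore compact on $H^{-\frac{1}{2}}(\partial\Omega)$, and the same Riesz--Fredholm reduction. The sign and normalization differences in your boundary integral equation versus the paper's are immaterial (they stem from the paper's nonstandard choice of sign for $\Phi$ in its jump relations).

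There is, however, a gap in your injectivity step. After using exterior uniqueness to get $u^s\equiv 0$ in $\bbr^N\setminus\overline{\Omega}$, the jump relations give interior traces $u^s|_-=\pm\mi\eta\,S_0^2\psi$ and $\partial_\nu u^s|_-=\pm\psi$. Green's identity in $\Omega$ then yields
\begin{equation*}
\pm\mi\eta\int_{\partial\Omega}|S_0\psi|^2\,\dx{s}=\int_{\Omega}\bigl(|\nabla u^s|^2-k^2|u^s|^2\bigr)\,\dx{x},
\end{equation*}
and taking imaginary parts (with $\eta$ real and nonzero) forces $S_0\psi=0$ on $\partial\Omega$ --- not, as you assert, that the interior homogeneous Helmholtz problem has only the trivial solution. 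That assertion would require $k^2$ not to be an interior Dirichlet eigenvalue of $\Omega$, which is exactly the case the combined potential is designed to avoid assuming; if $k^2$ \emph{is} such an eigenvalue, your interior problem with vanishing Dirichlet data $u^s|_-=\mi\eta\,S_0^2\psi=0$ may well admit nontrivial solutions, so you cannot read off $\psi=\partial_\nu u^s|_-=0$ from it. The missing ingredient is the paper's final step: from $S_0\psi=0$ one considers the Laplace single-layer potential $\mathbb{S}_0[\psi]$, which is continuous across $\partial\Omega$, harmonic in $\Omega$ and in $\bbr^N\setminus\overline{\Omega}$, vanishes on $\partial\Omega$, and decays at infinity; the maximum principle forces $\mathbb{S}_0[\psi]\equiv 0$, and the jump of its normal derivative across $\partial\Omega$ then gives $\psi=0$. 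With that argument supplied, your proof closes and matches the paper's.
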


\begin{proof}

The uniqueness is guaranteed by the Rellich lemma. We seek the solution $u^s$  in the form
\begin{equation}\label{integral2}
u^s = \int_{\partial \Omega} \left(\Phi(x,y)\psi(y) + \mi\eta \frac{\partial \Phi (x,y)}{\partial \nu (y)}(S_0^2\psi)(y) \right) \dx{s(y)} \coma x \in \bbr^N \backslash \overline{\Omega},
\end{equation}
where $\psi \in H^{-\frac{1}{2}}(\partial\Omega)$ and $\eta$ is a real coupling parameter $\eta \neq 0$. By $S_0$ we denote the single-layer operator in the potential theoretic limit $k =0$. The boundary condition and jump formulas for layer-potential operator yield
\begin{equation*}
(I - (K^* +\mi\eta TS_0^2))\psi = -2 g,
\end{equation*}
From the fact that $S_0^2$ is bounded  from $H^{-\frac{1}{2}}(\partial \Omega)$ to $H^{\frac{3}{2}}(\partial \Omega)$ and $T$ is bounded $H^{\frac{3}{2}}(\partial \Omega)$ to $H^{\frac{1}{2}}(\partial \Omega)$, $TS_0^2$ is compact from $H^{-\frac{1}{2}}(\partial \Omega)$ to $H^{-\frac{1}{2}}(\partial \Omega)$ with the aid of the compactness of the imbedding operators in  Soblev spaces.

Let $\psi$ be the solution to the homogeneous form of \eqref{integral2}. Then by the jump relations we can get
\begin{equation*}
-u^s|_- = \mathrm{i}\eta S_0^2\psi \coma - \frac{\partial u^s|_-}{\partial \nu} = -\psi.
\end{equation*}
By interchanging the order of integration and using Green's integral theorem, we obtain
\begin{equation*}
\mathrm{i}\eta \int_{\partial \Omega} |S_0\psi|^2\dx{x} = \mathrm{i}\eta \int_{\partial \Omega} \psi  S_0^2\overline{\psi}\dx{x} = \int_{\partial \Omega}\overline{u^s}|_- \frac{\partial u_-}{\partial \nu} \dx{x} = \int_{\Omega} |\nabla u^s|^2 -k^2|u^s|^2\dx{x}.
\end{equation*}
Hence $S_0\psi =0$ on $\partial \Omega$ follows. The single-layer potential with density $\psi$ and $k=0$ is continuous throughout the whole space and harmonic in $\Omega$ and $\bbr^N \backslash \overline{\Omega}$. By he maximum principle and jump relation  we can get $\psi =0$. Therefore, the Riesz-Fredholm theory together with the regularity estimates in layer-potential theory yield the desired results.
\end{proof}

\subsection{proof of Theorem \ref{mainresult}} 
With all the preliminary work, we can prove  the main theorem \ref{mainresult}.
\begin{proof}
When $\delta$ is sufficiently large,  we can readily get 
\begin{equation*}
\left\|\left.\frac{\partial u_{_{\delta}}}{\partial \nu}\right|_+\right\|_{H^{-\frac{1}{2}}(\partial \Omega)} = \left\|\left.\frac{\rho}{\tilde{\rho}}\frac{\partial u_{_{\delta}}}{\partial \nu}\right|_-\right\|_{H^{-\frac{1}{2}}(\partial \Omega)} \leqslant C\delta^{-1}\|u_{_{\delta}}\|_{H^1(\Omega)}\leqslant C \delta^{-\frac{1}{2}}\|u^i\|_{H^1_{loc}(\bbr^N\backslash \overline{\Omega})},
\end{equation*}
where we have used the inequality \eqref{aprior1}. Then let $u_{inf} = u_{_{\delta}} - u$ where $u$ is the unique solution to sound-hard obstacle scattering \eqref{9-2}, we can see that $u_{inf}$ satisfies Lemma \ref{9-6} with the boundary condition 
$$g :=\frac{\partial u_{_\delta}}{\partial \nu} ,$$
and it holds that
\begin{equation*}
\|u_{inf}\|_{H^1_{loc}(\bbr^N\backslash \overline{\Omega})} \leqslant C\left\|\left.\frac{\partial u_{_{\delta}}}{\partial \nu}\right|_+\right\|_{H^{-\frac{1}{2}}(\partial \Omega)} \leqslant C  \delta^{-\frac{1}{2}} \|u^i\|_{H_{loc}^{1}(\bbr^N\backslash \overline{\Omega})}.
\end{equation*}
We have proved $u_{_\delta} \rightarrow u $ in $H^1_{loc}(\bbr^N)$ as $\delta \rightarrow \infty$.  

When $\delta$ is sufficiently small, by using the transmission boundary condition in \eqref{9-1} and Green's formula we can get
\begin{equation*}
\begin{aligned}
\int_{\partial \Omega} \left.\frac{1}{\rho}\frac{\partial u_{_{\delta}}}{\partial \nu}\right|_+\dx{s}  + \frac{\omega^2}{\tilde{\kappa}} \mathrm{V}(\Omega)u_{_{\delta}}|_+  &= \frac{\omega^2}{\tilde{\kappa}} \mathrm{V}(\Omega)u_{_{\delta}}|_- + \int_{\partial \Omega}\left.\frac{1}{\tilde{\rho}}\frac{\partial u_{_{\delta}}}{\partial \nu}\right|_- \dx{s} \\
&= \frac{\omega^2}{\tilde{\kappa}} \mathrm{V}(\Omega)u_{_{\delta}}|_- + \int_{ \Omega} -\frac{\omega^2}{\tilde{\kappa}} u_{_{\delta}} \dx{x}\\
&= \frac{\omega^2}{\tilde{\kappa}} \mathrm{V}(\Omega) (u_{_\delta}|_- - \widetilde{u}_{_\delta ,\Omega}),
\end{aligned}
\end{equation*}
where $\widetilde{u}_{_\delta ,\Omega}$ is the integral average of $u_{_{\delta}}$ over $\Omega$.
By the Poincar\'e –Wirtinger inequality, there exists a constant $C$ depending only on $\Omega$, such that for any function $u \in H^{1}(\Omega)$  satisfying
\begin{equation*}
\|u - \widetilde{u}_{\Omega}\|_{L^2(\Omega)} \leqslant C\|\nabla u\|_{L^2(\Omega)}.
\end{equation*}
Since $\nabla (u - \widetilde{u}_{\Omega}) = \nabla u$, it is clear that
$
\|u - \widetilde{u}_{\Omega}\|_{H^1(\Omega)}  \leqslant C\|\nabla u\|_{L^2(\Omega)}.
$
Therefore with the Poincar\'e –Wirtinger inequality, it holds that 
\begin{equation*}
\begin{aligned}
\left\|\int_{\partial \Omega} \left. \frac{1}{\rho}\frac{\partial u_{_{\delta}}}{\partial \nu}\right|_+\dx{s} + \frac{\omega^2}{\tilde{\kappa}} \mathrm{V}(\Omega)u_{_\delta}|_+ \right\|_{H^{\frac{1}{2}}(\partial \Omega)} &= 
\frac{\omega^2}{\tilde{\kappa}} \mathrm{V}(\Omega)\left\|  u_{_\delta}|_- - \widetilde{u}_{_\delta ,\Omega}\right\|_{H^{\frac{1}{2}}(\partial \Omega)} \\
&\leqslant \frac{\omega^2}{\tilde{\kappa}} \mathrm{V}(\Omega)\left\|  u_{_{\delta}} - \widetilde{u}_{_\delta ,\Omega}\right\|_{H^1( \Omega)} \\
&\leqslant C  \|\nabla u_{_{\delta}}\|_{L^2(\Omega)} \leqslant C\delta^{\frac{1}{2}}\|u^i\|_{H^1(B_R\backslash\overline{\Omega})},
\end{aligned}
\end{equation*}
where we have used the inequality \eqref{aprior2}.  Let $u_{zero} = u_{_{\delta}}-v$ where $v$ is the unique solution to the obstacle scattering \eqref{9-3}, we can see that $u_{zero}$ satisfies the system \eqref{d1} together with the boundary condition
$$ f := \int_{\partial \Omega}\frac{1}{\rho}\frac{\partial u_{_{\delta}}}{\partial \nu}\dx{s}  + \frac{\omega^2}{\tilde{\kappa}} \mathrm{V}(\Omega)u_{\delta} .$$ 
Hence it holds that
\begin{equation*}
\|u_{zero}\|_{H^1_{loc}(\bbr^N\backslash \overline{\Omega})} \leqslant C \left\|\int_{\partial \Omega} \frac{1}{\rho}\frac{\partial u_{_{\delta}}}{\partial \nu}  + \frac{\omega^2}{\tilde{\kappa}} \mathrm{V}(\Omega)u_{_\delta}\right\|_{H^{\frac{1}{2}}(\partial \Omega)} \leqslant C\delta^{\frac{1}{2}}\|u^i\|_{H^1(B_R\backslash\overline{\Omega})}.
\end{equation*}
We have proved $u_{_\delta} \rightarrow v $ in $H^1_{loc}(\bbr^N)$ as $\delta \rightarrow 0$. 

We further consider the case where both $\delta$ and $\tau$ take extreme values. When $\delta$ and $|\tau|$ are sufficiently small, we can get
\begin{equation*}
\|u_{_\delta}|_+\|_{H^{\frac{1}{2}}(\partial \Omega)} = \|u_{_\delta}|_-\|_{H^{\frac{1}{2}}(\partial \Omega)} \leqslant \|u_{_\delta}\|_{H^1(\Omega)} \leqslant C \sqrt{\delta + |\tau|} \|u^i\|_{H_{loc}^{1}(\bbr^N\backslash \overline{\Omega})},
\end{equation*}
where we have used the a-prior estimate \eqref{aprior3}. Let $u_{dzero} = u_{_\delta} - w$ and from Lemma \ref{dirichlet}, it is obvious that
\begin{equation*}
\|u_{dzero}\|_{H^1_{loc}(\bbr^N\backslash \overline{\Omega})} \leqslant C \|u_{_\delta}\|_{H^{\frac{1}{2}}(\partial \Omega)} \leqslant C \sqrt{\delta + |\tau|} \|u^i\|_{H_{loc}^{1}(\bbr^N\backslash \overline{\Omega})}.
\end{equation*}
We obtain the sound-soft obstacle scattering system \eqref{9-22}. 

For the obstacle scattering system \eqref{9-222}, we need to prove that $u_\delta$ converges to a constant on the boundary when $\delta \rightarrow 0$  and $|\tau| \rightarrow \infty$ .   This claim in fact is always correct  for any $|\tau|$ since the coefficient $C$ is independent of $|\tau|$ for the inequality
\begin{equation*}
\|\nabla u_{_\delta}\|_{L^2(\Omega)} \leqslant C \delta^{\frac{1}{2}} \|u^i\|_{H_{loc}^{1}(\bbr^N\backslash \overline{\Omega})}.
\end{equation*} 
Hence 
\begin{equation}\label{yy}
\left\|  u_{_\delta}|_+ - \widetilde{u}_{_\delta ,\Omega}\right\|_{H^{\frac{1}{2}}(\partial \Omega)} = \left\|  u_{_\delta}|_- - \widetilde{u}_{_\delta ,\Omega}\right\|_{H^{\frac{1}{2}}(\partial \Omega)} \rightarrow 0 \quad \text{\rm as} \quad \delta\rightarrow 0.
\end{equation}
It means that $u$ is constant on $\partial \Omega$ and equal the integral average $\widetilde{u}_{\delta ,\Omega}$ of $u_\delta$ over $\Omega$.
In addition, by using the Schwartz inequality and the estimate \eqref{udelta}, it holds that
\begin{equation}\label{xx} 
\begin{aligned}
\left|\int_{\partial \Omega} \left.\frac{1}{\rho}\frac{\partial u_{_{\delta}}}{\partial \nu}\right|_+\dx{s} \right| &= \left|\int_{\partial \Omega} \left.\frac{1}{\tilde{\rho}}\frac{\partial u_{_{\delta}}}{\partial \nu}\right|_-\dx{s} \right| = \left| \int_{\Omega} \frac{\omega^2}{\tilde{\kappa}} u_{_\delta} \dx{x}\right| \\
&\leqslant C |\tau|^{-1} \|u_{_\delta}\|_{L^2(\Omega)} \leqslant  C |\tau|^{-\frac{1}{2}} \|u^i\|_{H^1_{loc}(\bbr^N)},
\end{aligned}
\end{equation}
where C is some constant only depending on $\Omega, k, \alpha,\beta$.
Let $u_{dinf} = u_{_\delta} - t$ and we can find that $u_{dinf}$ satisfies \eqref{tss} with 
$$d = u_\delta - \widetilde{u}_{\delta ,\Omega} \coma a_0 = \int_{\partial \Omega} \frac{1}{\rho}\partial_\nu u_{_\delta}.$$
 Lemma \ref{33} together with   \eqref{yy} and \eqref{xx} shows that $u_{_\delta} \rightarrow t $ in $H^1_{loc}(\bbr^N)$ as $\delta \rightarrow 0$ and $|\tau| \rightarrow \infty$. 
 We have finished the whole proof.
\end{proof}

\section{The wave scattering from a ball}
In this section, we shall consider the three-dimensional acoustic scattering from a unit ball. The analysis of two-dimensional case can be treated in a similar manner with no much technical difficulty. We would prove that the inhomogeneous medium scattering would converge to the corresponding obstacle scattering in terms of $\delta$ and $\tau$, which reinforces our theoretical results in Section \ref{1dot2}.

Let $j_n(t)$ and $h^{(1)}_n(t)$ be, respectively, the spherical Bessel and Hankel functions of order $n \in \bbn$, and $Y_n^m$ be the spherical harmonics. For $\mb{x} \in \bbr^3$, the incident field can be rewritten in Fourier series as 
\begin{equation*}
u^i(r\hat{\mb{x}}) = \sum_{n=0}^\infty \sum_{m=-n}^{n}a_n^m j_n(kr)Y_n^m(\hat{\mb{x}}),
\end{equation*}
and we assume  $u^i \in H^1_{loc}(\bbr^3)$. 
The assumption of uniform convergence on compact subsets of $\bbr^3$ guarantees the summation boundedness of the coefficients, namely,
\begin{equation}\label{coeffiect}
    \sum_{n=0}^\infty \sum_{m=-n}^{n}|a_n^m|^2 \leqslant C,
\end{equation}
for some positive constant $C$.
With the above series representation of $u^i$, the solution to transmission problem $\eqref{9-1}$ for a unit ball can be rewritten as 
\begin{equation}\label{presentation}
u_{_{\delta}}(r\hat{\mb{x}}) =
\begin{cases}
 \displaystyle \sum_{n=0}^\infty\sum_{m=-n}^{n}b_n^m j_n(k_br)Y_n^m(\hat{\mb{x}})  \coma x \in B_1 , \medskip\\
 \displaystyle \sum_{n=0}^\infty\sum_{m=-n}^{n}c_n^m h_n^{(1)}(kr)Y_n^m(\hat{\mb{x}}) + u^i(r\hat{\mb{x}}) \coma x \in \bbr^3\backslash \overline{B_1}, \medskip
\end{cases}
\end{equation}
where $k_b = k\sqrt{\delta / \tau}$. Applying the third line transmission boundary condition in \eqref{9-1} and $u_{_\delta}$ represented in \eqref{presentation}, we have
\begin{equation*}
\begin{aligned}
 b_n^m j_n(k_b) &=  a_n^m j_n(k) + c_n^m  h^{(1)}_n(k) \coma
b_n^m k_b j'_n(k_b) = \delta \left(a_n^m k j'_n(k) + c_n^m k {h_n^{(1)}}'(k) \right).
\end{aligned}
\end{equation*}
Solving the above equations yields
\begin{equation*}
\begin{aligned}
b_n^m = \frac{j_n(k){h_n^{(1)}}'(k) - j'_n(k)h_n^{(1)}(k)}{{h_n^{(1)}}'(k)j_n(k_b) - \frac{1}{\sqrt{\delta \tau}}h_n^{(1)}(k)j'_n(k_b)} a_n^m,\\
c_n^m = \frac{j_n(k)j'_n(k_b) - \sqrt{\delta \tau} j'_n(k)j_n(k_b)}{\sqrt{\delta \tau}{h_n^{(1)}}'(k)j_n(k_b) - h_n^{(1)}(k)j'_n(k_b)}a_n^m.
\end{aligned}
\end{equation*}
Similarly, For obstacle scattering problems \eqref{9-3},  \eqref{9-222}, \eqref{9-22}, \eqref{9-2}, the total fields illuminated by the incident field $u^i$ in series representations are
\begin{align}
v(r \hat{\mb{x}}) =& - \sum_{n=1}^\infty\sum_{m=-n}^{n}a_n^m\frac{j_n(k)}{{h}_n^{(1)}(k)}h_n^{(1)}(kr)Y_n^m(\hat{\mb{x}}) \nonumber  \\
&- a_0^0\frac{3\tau  j'_0(k)+kj_0(k)}{3 \tau {h_0^{(1)}}'(k)+ k h_0^{(1)}(k)}h_0^{(1)}(kr)Y_0^0(\hat{\mb{x}}) + u^i(r\hat{\mb{x}}), \label{v} \\
\psi(r \hat{\mb{x}}) =&  -  \sum_{n=0}^\infty\sum_{m=-n}^{n}a_n^m\frac{j_n(k)}{{h}_n^{(1)}(k)}h_n^{(1)}(kr)Y_n^m(\hat{\mb{x}}) + u^i(r\hat{\mb{x}}) \coma \psi = w(r \hat{\mb{x}}), t(r \hat{\mb{x}})\label{w} , \\
u(r \hat{\mb{x}}) =& -\sum_{n=0}^\infty \sum_{n=-m}^m a_n^m \frac{j'_n(k)}{{h^{(1)}_n}'(k)} h^{(1)}_n(kr) Y_n^m(\hat{\mb{x}}) +u^i(r\hat{\mb{x}}), \label{u} 
\end{align}
It is noted that $t(r\hat{\mb{x}})$ shares the same mathematical representation of solution with $w(r \hat{\mb{x}})$ for the special obstacle with the unit ball shape.

The solution to the Helmholtz equation for the radial geometry can give an explicit presentation as the series expansion,  which is convenient to validate the convergence results. We shall prove the convergence of $u_{_\delta}$  when the material parameters degenerate to zero and infinity.

\begin{proposition}
Consider the inhomogeneous medium scattering \eqref{9-1} and assume $u^i \in H_{loc}^1({\bbr^3})$. Then the total field $u_{_\delta}$ illuminated by the incident field $u^i$ on a unit ball has the following convergences:
\begin{align*}
&u_{_\delta} \rightarrow v  \; \text{\rm in} \; H_{loc}^1({\bbr^3\backslash \overline{B_1}}) \; \text{\rm as} \; \delta \rightarrow 0; \hspace{18pt}
u_{_\delta} \rightarrow t \; \text{\rm in} \; H_{loc}^1({\bbr^3\backslash \overline{B_1}}) \; \text{\rm as} \; \delta \rightarrow 0 \; \text{\rm and} \; |\tau| \rightarrow \infty;\\
&u_{_\delta} \rightarrow u  \; \text{\rm in} \; H_{loc}^1({\bbr^3 \backslash \overline{B_1}}) \; \text{\rm as} \; \delta \rightarrow \infty ; \quad  
u_{_\delta} \rightarrow w  \; \text{\rm in} \; H_{loc}^1({\bbr^3\backslash \overline{B_1}}) \; \text{\rm as} \; \delta \rightarrow 0 \; \text{\rm and} \; |\tau| \rightarrow 0;
\end{align*} 
where $v,t,w,u$ are the unique solutions to \eqref{9-3},  \eqref{9-222},\eqref{9-22},\eqref{9-2} respectively.
\end{proposition}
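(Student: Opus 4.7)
The plan is to exploit the explicit series representations to reduce each convergence to a termwise analysis of the Hankel coefficients, followed by a dominated-convergence argument in $H^1$. Since $u_{_\delta}$ and each of $v,t,w,u$ share the incident contribution $u^i$, their difference on $\bbr^3\setminus\overline{B_1}$ has the purely Hankel-series form
\[
u_{_\delta} - \mathrm{target} = \sum_{n=0}^\infty \sum_{m=-n}^n \bigl(c_n^m - c_n^{m,\star}\bigr) h_n^{(1)}(kr)Y_n^m(\hat{\mb{x}}),
\]
where $c_n^{m,\star}$ denotes the outgoing coefficient of the target obstacle solution read off from \eqref{v}--\eqref{u}. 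The task is then algebraic plus asymptotic.

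\textbf{Coefficient asymptotics.} First I would simplify $c_n^m-c_n^{m,\star}$ using the Wronskian identity $j_n(k){h_n^{(1)}}'(k) - j_n'(k)h_n^{(1)}(k) = \mi/k^2$. A direct computation against the explicit $c_n^m$ collapses the algebra; for instance, for the $v$-case with $n\ge 1$ the numerator reduces to $a_n^m\sqrt{\delta\tau}\,j_n(k_b)\mi/k^2$ over the denominator $h_n^{(1)}(k)\bigl[\sqrt{\delta\tau}{h_n^{(1)}}'(k)j_n(k_b) - h_n^{(1)}(k)j_n'(k_b)\bigr]$. Inserting the Bessel asymptotics of Appendix A then identifies the limit. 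As $\delta\to\infty$, $k_b=k\sqrt{\delta/\tau}\to\infty$ and the large-argument expansions $j_n(t)\sim\sin(t-n\pi/2)/t$, $j_n'(t)\sim\cos(t-n\pi/2)/t$ force $c_n^m\to -a_n^m j_n'(k)/{h_n^{(1)}}'(k)$ at rate $\delta^{-1/2}$, matching $u$. As $\delta\to 0$ with $\tau$ fixed, the small-argument expansions $j_n(t)\sim t^n/(2n+1)!!$ and $j_0'(t)\sim -t/3$ single out the $n\ge 1$ modes as converging to $-a_n^m j_n(k)/h_n^{(1)}(k)$ and leave the $n=0$ mode with the $\tau$-dependent correction appearing in \eqref{v}, at rate $\delta^{1/2}$. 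Sending $|\tau|\to\infty$ or $|\tau|\to 0$ in that $n=0$ coefficient then recovers $t$ or $w$ with additional rates $|\tau|^{-1/2}$ or $|\tau|^{1/2}$.

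\textbf{From coefficients to $H^1_{loc}$ and main obstacle.} To promote the termwise convergence to $H^1_{loc}$ convergence, I would use orthonormality of $\{Y_n^m\}$ on $\mathbb{S}^2$ and the standard identity
\[
\|F\|_{H^1(B_R\setminus\overline{B_1})}^2 \simeq \sum_{n,m}\int_1^R\left(|F_n^m(r)|^2 + |(F_n^m)'(r)|^2 + \tfrac{n(n+1)}{r^2}|F_n^m(r)|^2\right)r^2\,dr
\]
applied to $F=u_{_\delta}-\mathrm{target}$ with $F_n^m(r)=(c_n^m-c_n^{m,\star})h_n^{(1)}(kr)$. Combined with \eqref{coeffiect} and the super-exponential decay of $j_n(k)/h_n^{(1)}(k)$ in $n$, this produces a dominated majorant allowing the limit to be passed under the sum. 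The main obstacle is establishing a uniform-in-$n$ estimate $|c_n^m-c_n^{m,\star}|\le|a_n^m|\,\varepsilon(\delta,\tau)$: the dominant balance between $\sqrt{\delta\tau}{h_n^{(1)}}'(k)j_n(k_b)$ and $h_n^{(1)}(k)j_n'(k_b)$ in the denominator of $c_n^m$ shifts with $n$ through the small-argument Bessel expansions. I would overcome this by splitting at some threshold $N$ depending only on $k$: low modes contribute a finite sum handled by the explicit termwise analysis, while high modes are majorised by $\sum_{n>N}|a_n^m|^2$, which \eqref{coeffiect} makes arbitrarily small in the tail. Once this split is in place, assembling the four convergences is routine.
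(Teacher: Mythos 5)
Your proposal follows essentially the same path as the paper's: rewrite the explicit spherical-harmonic series for $u_{_\delta}$ and the target solutions, form the purely outgoing difference $\sum (c_n^m-c_n^{m,\star})h_n^{(1)}(kr)Y_n^m$, collapse the coefficient ratio with the Wronskian $j_n h_n^{(1)\prime}-j_n' h_n^{(1)}=\mi/k^2$, then apply small-argument ($k_b\to0$), large-argument ($k_b\to\infty$), and large-order Bessel asymptotics to show termwise convergence, and finally assemble a uniform-in-$n$ bound to pass to $H^1_{loc}$. The paper obtains the uniform bound by comparing $q_n h_n^{(1)}(kr)/j_n(kr)$ against the incident-field mode via Lemma~\ref{b1} (exploiting the $r^{-(2n+1)}$ gain for $r>1$) rather than your split-at-$N$ with a tail estimate from \eqref{coeffiect}; these are interchangeable, with the paper's version delivering the sharp rate directly. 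One substantive divergence is worth flagging: you reach $t$ and $w$ by taking further limits $|\tau|\to\infty,0$ in the $n=0$ coefficient of $v$, while the paper compares $u_{_\delta}$ directly to the common representation \eqref{w}. Your route correctly isolates the $n=0$ coefficient of $t$ as $-a_0^0\,j_0'(k)/{h_0^{(1)}}'(k)$, the Neumann-type value forced by $\int_{\partial\Omega}\partial_\nu t\,\mathrm{d}s=0$; by contrast, \eqref{w} assigns $t$ the Dirichlet value $-a_0^0\,j_0(k)/h_0^{(1)}(k)$, and the paper's claim that $l_0\to0$ tacitly invokes $Q(0,k_b)\sim0$ from Lemma~\ref{b2}, whereas the actual expansion is $Q(0,k_b)=j_0'(k_b)/j_0(k_b)\sim -k_b/3$. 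If you carry your computation through, you will find the limit of $u_{_\delta}$ indeed matches the solution of \eqref{9-222} but not the series \eqref{w} as written; you should state the corrected $n=0$ coefficient for $t$ rather than relying on the paper's identification of $t$ and $w$.
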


\begin{proof}
From the solutions presented in \eqref{presentation} and \eqref{u}, we can obtain when $x \in \bbr^3 \backslash \overline{B_1}$,
\begin{equation}\label{quu}
\begin{aligned}
&u_{_{\delta}}(r\hat{\mb{x}}) - u(r \hat{\mb{x}}) \\
=&  \sum_{n=0}^\infty\sum_{m=-n}^{n}(c_n^m+ a_n^m \frac{j'_n(k)}{{h^{(1)}_n}'(k)}) h_n^{(1)}(kr)Y_n^m(\hat{\mb{x}}) \\
=&   \sum_{n=0}^\infty\sum_{m=-n}^{n} a_n^m \frac{j_n(k){h_n^{(1)}}'(k) - j'_n(k)h_n^{(1)}(k)}{{\sqrt{\delta \tau}}{h_n^{(1)}}'(k)j_n(k_b) -h_n^{(1)}(k)j'_n(k_b)} \frac{j_n'(k_b)}{{h_n^{(1)}}'(k)} h_n^{(1)}(kr)Y_n^m(\hat{\mb{x}}) \\
:=& \sum_{n=0}^\infty\sum_{m=-n}^{n} q_n a_n^m h_n^{(1)}(kr)Y_n^m(\hat{\mb{x}}) \coma 
\end{aligned}
\end{equation}

where $q_n$ is defined as
\begin{equation}\label{qn}
q_n = \delta^{-\frac{1}{2}} \frac{j_n(k){h_n^{(1)}}'(k) - j'_n(k)h_n^{(1)}(k)}{{\sqrt{\tau}}{h_n^{(1)}}'(k)j_n(k_b) - \delta^{-\frac{1}{2}} h_n^{(1)}(k)j'_n(k_b)} \frac{j_n'(k_b)}{{h_n^{(1)}}'(k)}.
\end{equation}

In virtue of the Wronskian  
\begin{equation}\label{wronskian}
j_n(x)y_n'(x) - j'_n(x)y_n(x) = \frac{1}{x^2},
\end{equation}
 we have
\begin{equation} \label{qnw}
\begin{aligned}
q_n &= \mi\delta^{-\frac{1}{2}} \frac{j_n(k)y'_n(k) - j'_n(k)y_n(k)}{{\sqrt{\tau}}{h_n^{(1)}}'(k)j_n(k_b) - \delta^{-\frac{1}{2}} h_n^{(1)}(k)j'_n(k_b)} \frac{j_n'(k_b)}{{h_n^{(1)}}'(k)}\\
&=\frac{\mi \delta^{-\frac{1}{2}}}{k^2} \frac{1}{{h_n^{(1)}}'(k)({\sqrt{\tau}}{Q(n,k_b)h_n^{(1)}}'(k) - \delta^{-\frac{1}{2}} h_n^{(1)}(k))} ,
\end{aligned}
\end{equation}
where 
$$Q(n,k_b) = \frac{j_n(k_b)}{j'_n(k_b)}.$$
Since $ \delta \rightarrow \infty$, then $ \Re(k_b) , \Im(k_b) \rightarrow \infty.$
 From Lemma \ref{b2}, we can get
\begin{equation*}
Q(n,k_b)  \sim e^{-\mi \frac{\pi}{2}}.
\end{equation*}
Hence, we can readily obtain
\begin{equation*}
    q_n \sim C \delta^{-\frac{1}{2}} \coma n \in\bbn \coma \delta \rightarrow +\infty,
\end{equation*}
where $C$ is some constant independent of $\delta$ and $n \in \bbn$.

In order to obtain the uniform convergence for $ u_{_\delta}-u$, we need the asymptotic expansions of $j_n(x),h_n^{(1)}(x)$  for large orders. 
Combining  \eqref{qnw} and Lemma \ref{b1}, we obtain by direct calculations
\begin{equation*}
\begin{aligned}
    q_n \frac{h_n^{(1)}(kr)}{j_n(kr)} &= \frac{\mi \delta^{-\frac{1}{2}}}{k^2}\frac{h_n^{(1)}(kr)}{{h_n^{(1)}}'(k)j_n(kr)}  \frac{1}{{\sqrt{\tau}}{Q(n,k_b)h_n^{(1)}}'(k) - \delta^{-\frac{1}{2}} h_n^{(1)}(k)} \\
    &\sim \frac{2n+1}{n+1}\frac{k}{(n+1)\sqrt{\tau}\mi + k \delta^{-\frac{1}{2}}}\frac{1}{r^{2n+1}} \delta^{-\frac{1}{2}},
\end{aligned}
\end{equation*}
when $n$ is sufficiently large. Since $r>1$, there exists a constant $C$ independent of $n,m$ and $\delta$ such that
\begin{equation*}
    |(u_{_{\delta}}(r\hat{\mb{x}}) - u(r \hat{\mb{x}}))_n^m| \leqslant C \delta^{-\frac{1}{2}} |(u^i(r\hat{\mb{x}}))_n^m|,
\end{equation*} 
where $(\cdot)_n^m$ is the corresponding $nm$-th component in the series \eqref{quu}.
By the absolute homogeneity of norm, we can get $u_{_{\delta}} \rightarrow u$ as $\delta \rightarrow \infty$.

Similarly, we obtain from \eqref{presentation} and \eqref{v} 
\begin{equation}\label{qvv}
\begin{aligned}
&u_{_{\delta}}(r\hat{\mb{x}}) - v(r \hat{\mb{x}}) \\
=&  \sum_{n=0}^\infty\sum_{m=-n}^{n}(c_n^m+ a_n^m \frac{j_n(k)}{{h^{(1)}_n}(k)}) h_n^{(1)}(kr)Y_n^m(\hat{\mb{x}}) \\
=& a_0^0\frac{(j_0(k){h_0^{(1)}}'(k)-j'_0(k)h_0^{(1)}(k))(3\tau j'_0(k_b)+k\sqrt{\delta \tau}j_0(k_b))}{(\sqrt{\delta \tau}{h_0^{(1)}}'(k)j_0(k_b) - h_0^{(1)}(k)j'_0(k_b))(3 \tau {h_0^{(1)}}'(k)+ k h_0^{(1)}(k))}h_0^{(1)}(kr) Y_0^0(\hat{\mb{x}})\\
&+ \sum_{n=1}^\infty\sum_{m=-n}^{n} a_n^m  \frac{\sqrt{\delta\tau}(j_n(k){(h_n^{(1)}}'(k) - j'_n(k)h_n^{(1)}(k))}{{\sqrt{\delta \tau}}{h_n^{(1)}}'(k)j_n(k_b) -h_n^{(1)}(k)j'_n(k_b)} \frac{j_n(k_b)}{{h_n^{(1)}}(k)} h_n^{(1)}(kr)Y_n^m(\hat{\mb{x}}) \\
:=& \sum_{n=0}^\infty\sum_{m=-n}^{n} p_n a_n^m h_n^{(1)}(kr)Y_n^m(\hat{\mb{x}})  \coma r >1 ,
\end{aligned}
\end{equation}
where
\begin{equation*}
 \begin{aligned}
p_0 &= \sqrt{\tau}\frac{(j_0(k){h_0^{(1)}}'(k)-j'_0(k)h_0^{(1)}(k))(3\sqrt{\tau} j'_0(k_b)+k\sqrt{\delta}j_0(k_b))}{(\sqrt{\delta \tau}{h_0^{(1)}}'(k)j_0(k_b) - h_0^{(1)}(k)j'_0(k_b))(3 \tau {h_0^{(1)}}'(k)+ k h_0^{(1)}(k))} \coma\\
p_n &= \sqrt{\delta\tau} \frac{j_n(k){h_n^{(1)}}'(k) - j'_n(k)h_n^{(1)}(k)}{{\sqrt{\delta \tau}}{h_n^{(1)}}'(k)j_n(k_b) -h_n^{(1)}(k)j'_n(k_b)} \frac{j_n(k_b)}{{h_n^{(1)}}(k)} \coma n \geqslant 1.
\end{aligned}   
\end{equation*}
We use the Wronskian \eqref{wronskian} again to rewrite $p_n$ as
\begin{equation*}
\begin{aligned}
p_n &= \frac{\mi\sqrt{\delta\tau}}{k^2} \frac{j_n(k_b)}{{h_n^{(1)}}(k)({\sqrt{\delta \tau}}{h_n^{(1)}}'(k)j_n(k_b) -h_n^{(1)}(k)j'_n(k_b))} \\
&= \frac{\mi\sqrt{\delta\tau}}{k^2} \frac{1}{{h_n^{(1)}}(k)({\sqrt{\delta \tau}}{h_n^{(1)}}'(k) -h_n^{(1)}(k)Q(n,k_b))}
\end{aligned}
\end{equation*}
When $\delta \rightarrow 0$, it is clear that $k_b =k(\delta/\tau)^{1/2} \rightarrow 0$ and hence $Q(n,k_b) \sim n/k_b$. From the Lemma \ref{b2}, it holds that
\begin{equation*}
p_n \sim \frac{\mi\sqrt{\delta\tau}}{k^2} \frac{1}{{h_n^{(1)}}(k)({\sqrt{\delta \tau}}{h_n^{(1)}}'(k) - nh_n^{(1)}(k)/k_b)} \leqslant C \delta^{\frac{1}{2}}  \coma \delta \rightarrow 0 \coma n \in \bbn .
\end{equation*}
When $n$ is sufficiently large, we can readily from Lemma \ref{b1} get 
   \begin{equation*} 
    p_n \frac{h_n^{(1)}(kr)}{j_n(kr)} \sim \frac{2n+1}{n}\frac{1}{r^{2n+1}}\frac{n}{(n+1)\delta^{\frac{1}{2}} +n \tau \delta^{-\frac{1}{2}}} \delta^{\frac{1}{2}} \rightarrow 0.
\end{equation*} 
Finally, we can get
\begin{equation}\label{vpn}
    |(u_{_{\delta}}(r\hat{\mb{x}}) - v(r \hat{\mb{x}}))_n^m| \leqslant C \delta^{\frac{1}{2}} |(u^i(r\hat{\mb{x}}))_n^m| \coma n \geqslant 1.
\end{equation} 
where $(\cdot)_n^m$ is the corresponding $nm$-th component in the series \eqref{qvv}.
When $n=0$, we can rewrite $p_0$ as
\begin{equation*}
\begin{aligned}
p_0 &= \frac{\mi}{k^2}\frac{3\tau j'_0(k_b)/j_0(k_b)+ k\sqrt{\delta \tau}}{(\sqrt{\delta \tau}{h_0^{(1)}}'(k) - h_0^{(1)}(k)j'_0(k_b)/j_0(k_b))(3 \tau {h_0^{(1)}}'(k)+ k h_0^{(1)}(k))}
\end{aligned}
\end{equation*}
Since $j_0(x) = \sin x/ x$, straight calculations gives
\begin{equation*}
\frac{j_0'(k_b)}{j_0(k_b)}  = \frac{1}{\tan k_b} - \frac{1}{k_b} \sim - \frac{1}{3}k_b -\frac{1}{45}k_b^3 + \mathcal{O}(k_b^5) \coma k_b \rightarrow 0.
\end{equation*}
Then,
\begin{equation}\label{vp0}
p_0 = -\frac{\mi  k_b^2}{45k}\frac{\tau + \mathcal{O}(k_b^2)}{(\tau h_0^{(1)}(k)+ kh_0^{(1)}(k)(1+\frac{1}{15}k_b^2+\mathcal{O}(k_b^4)))(3 \tau {h_0^{(1)}}'(k)+ k h_0^{(1)}(k))} \sim k_b^2.
\end{equation}
Combining \eqref{vpn} and \eqref{vp0}, we can conclude that
 $u_{_{\delta}} \rightarrow v$ in $H_{loc}^1(\bbr^3)$ as $\delta \rightarrow 0$.
 
We now prove the remaining two convergences to the solution presented in \eqref{w}. Direct calculation gives
\begin{align*}
&u_{_{\delta}}(r \hat{\mb{x}}) - \psi(r \hat{\mb{x}})  \\
=&  \sum_{n=0}^\infty\sum_{m=-n}^{n}(c_n^m+ a_n^m \frac{j_n(k)}{{h^{(1)}_n}(k)}) h_n^{(1)}(kr)Y_n^m(\hat{\mb{x}}) \\
=&\sum_{n=0}^\infty\sum_{m=-n}^{n} a_n^m  \frac{\sqrt{\delta\tau} (j_n(k){h_n^{(1)}}'(k) - j'_n(k)h_n^{(1)}(k))}{{\sqrt{\delta \tau}}{h_n^{(1)}}'(k)j_n(k_b) -h_n^{(1)}(k)j'_n(k_b)} \frac{j_n(k_b)}{{h_n^{(1)}}(k)} h_n^{(1)}(kr)Y_n^m(\hat{\mb{x}}) \\
=&\sum_{n=0}^\infty\sum_{m=-n}^{n} a_n^m \frac{\mi}{k^2} \frac{\sqrt{\delta\tau}}{{\sqrt{\delta \tau}}{h_n^{(1)}}'(k)j_n(k_b) -h_n^{(1)}(k)j'_n(k_b)} \frac{j_n(k_b)}{{h_n^{(1)}}(k)} h_n^{(1)}(kr)Y_n^m(\hat{\mb{x}})  \\
:=& \sum_{n=0}^\infty\sum_{m=-n}^{n} l_n a_n^m h_n^{(1)}(kr)Y_n^m(\hat{\mb{x}})  \coma
\end{align*}
where
\begin{equation*}
l_n = \frac{\mi}{k^2}\frac{1}{h_n^{(1)}(k)}\frac{1}{{h_n^{(1)}}'(k) - {h_n^{(1)}}(k)Q(n,k_b) (\delta\tau)^{-\frac{1}{2}}}.
\end{equation*}
It is emphasized that $j_n(k_b)$ is always nonzero since the zeros of $j_n$ are all real and $k_b$ is a complex number with a positive imaginary part. We shall consider $Q(n,k_b)$ in two extreme cases. 
When $\delta \rightarrow 0$ and $|\tau| \rightarrow \infty$,  it is clear that $k_b \rightarrow 0$. Hence 
\begin{equation*}
l_n \sim  \frac{\mi}{k}\frac{1}{h_n^{(1)}(k)}\frac{1}{{kh_n^{(1)}}'(k) - {h_n^{(1)}}(k)n \delta} \sim \delta \rightarrow 0 \coma n \in \bbn.
\end{equation*}
When $n$ is sufficiently large, it holds that
\begin{equation*}
l_n \frac{h_n^{(1)}(kr)}{j_n(kr)}  \sim \frac{2n+1}{n+1}\frac{k}{(n+1)\sqrt{\tau}\mi + k \delta^{-\frac{1}{2}}}\frac{1}{r^{2n+1}} \delta^{-\frac{1}{2}} \rightarrow 0.
\end{equation*}
The uniform boundedness shows
\begin{equation*} 
    |(u_{_{\delta}}(r\hat{\mb{x}}) - \psi(r \hat{\mb{x}}))_n^m| \leqslant C \delta^{\frac{1}{2}} |(u^i(r\hat{\mb{x}}))_n^m|,
\end{equation*} 
and hence  $u_{_{\delta}} \rightarrow t$ in $H_{loc}^1(\bbr^3)$ as $\delta \rightarrow 0$ and $|\tau| \rightarrow \infty$.

When $\delta \rightarrow 0$ and $|\tau| \rightarrow 0$, there is no restriction about the asymptotic behaviors of $k_b$. We need to analyze $k_b$ for different extreme values. First , it is clear from Lemma \ref{b2} that
\begin{equation*}
Q(n,k_b) (\delta \tau)^{-\frac{1}{2}} \sim
\begin{cases}
\displaystyle\frac{n}{k}\delta^{-1} \coma k_b \rightarrow 0 \coma n \in \bbn ,\\
\displaystyle (\delta \tau)^{-\frac{1}{2}}   \coma k_b \sim 1 \coma n \in \bbn, \\
\displaystyle -\mi (\delta \tau)^{-\frac{1}{2}}\coma k_b \rightarrow \infty \coma n \in \bbn.
\end{cases}
\end{equation*} 
Then we can obtain
\begin{equation*}
l_n \sim  \max\{(\delta\tau)^{\frac{1}{2}},\delta\} \rightarrow 0.
\end{equation*}
When $n$ is sufficiently large, it holds that
\begin{equation*}
l_n \frac{h_n^{(1)}(kr)}{j_n(kr)}  \sim \frac{2n+1}{n+1}\frac{k}{(n+1)\sqrt{\tau}\mi + k \delta^{-\frac{1}{2}}}\frac{1}{r^{2n+1}}  \max\{(\delta\tau)^{\frac{1}{2}},\delta\} \rightarrow 0.
\end{equation*}
hence  $u_{_{\delta}} \rightarrow w$ in $H_{loc}^1(\bbr^3)$ as $\delta \rightarrow 0$ and $|\tau| \rightarrow 0$. We have finished the proof.
\end{proof}

The far-field pattern describes the scattering amplitude in the infinity, which contains all the information of the scattering field in every direction. The following corollary is a straightforward corollary for the far-field patterns.
\begin{corollary}
Let $\mathcal{F}_\delta,\mathcal{F}_v,\mathcal{F}_t,\mathcal{F}_w,\mathcal{F}_u$ be the far field patterns of the systems \eqref{9-1}, \eqref{9-3},  \eqref{9-222},\eqref{9-22},\eqref{9-2} respectively. then the following sharp estimates hold:
\begin{align*}
&\mathcal{F}_\delta \rightarrow  \mathcal{F}_u  \; \textit{\rm in}  \; L^2(\mathbb{S}^2) \; \textit{\rm  as} \; \delta \rightarrow \infty \; ; \quad  
\mathcal{F}_\delta \rightarrow  \mathcal{F}_w  \; \textit{\rm  in}  \; L^2(\mathbb{S}^2) \; \textit{\rm as} \; \delta \rightarrow 0 \; \text{\rm and} \; |\tau| \rightarrow 0.   \\
&\mathcal{F}_\delta \rightarrow  \mathcal{F}_v  \; \textit{\rm  in}  \; L^2(\mathbb{S}^2) \; \textit{\rm  as} \; \delta \rightarrow 0 \; ; \hspace{17pt}
\mathcal{F}_\delta \rightarrow  \mathcal{F}_q  \; \textit{ \rm  in}  \; L^2(\mathbb{S}^2) \; \textit{\rm  as} \; \delta \rightarrow 0 \; \text{\rm and} \; |\tau| \rightarrow \infty.
\end{align*}

\end{corollary}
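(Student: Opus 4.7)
The plan is to read off each far-field pattern directly from the series representations already obtained in the preceding proposition and then apply Parseval's identity on $\mathbb{S}^2$. Recall the Hankel asymptotic $h_n^{(1)}(kr)=(-\mi)^{n+1} e^{\mi kr}/(kr) + \mathcal{O}(r^{-2})$ as $r \to \infty$; consequently, any radiating series $\sum_{n,m} c_n^m h_n^{(1)}(kr)Y_n^m(\hat{\mb{x}})$ has far-field pattern $\frac{1}{k}\sum_{n,m} (-\mi)^{n+1} c_n^m Y_n^m(\hat{\mb{x}})$. Applying this to the differences \eqref{quu} and \eqref{qvv} (and to the analogous expansion for $u_{_\delta}-\psi$) yields
\begin{equation*}
\mathcal{F}_\delta - \mathcal{F}_u = \frac{1}{k}\sum_{n,m}(-\mi)^{n+1} q_n a_n^m Y_n^m, \quad \mathcal{F}_\delta - \mathcal{F}_v = \frac{1}{k}\sum_{n,m}(-\mi)^{n+1} p_n a_n^m Y_n^m,
\end{equation*}
and likewise $\mathcal{F}_\delta-\mathcal{F}_t$, $\mathcal{F}_\delta - \mathcal{F}_w$ are given by the same formula with the coefficients $l_n a_n^m$ under the respective limiting regimes.

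Second, because $\{Y_n^m\}$ is an orthonormal basis of $L^2(\mathbb{S}^2)$, Parseval gives
\begin{equation*}
\|\mathcal{F}_\delta - \mathcal{F}_\star\|_{L^2(\mathbb{S}^2)}^2 = \frac{1}{k^2}\sum_{n,m} |s_n|^2 |a_n^m|^2,
\end{equation*}
with $s_n \in \{q_n, p_n, l_n\}$ according to the case. I would then invoke the bounds already proved in the preceding proposition, namely $|q_n| \lesssim \delta^{-1/2}$, $|p_n| \lesssim \delta^{1/2}$ (the $n=0$ term being even better, of order $k_b^2$), $|l_n| \lesssim \delta$ when $\delta \to 0$ and $|\tau|\to\infty$, and $|l_n| \lesssim \max\{(\delta\tau)^{1/2},\delta\}$ when $\delta,|\tau|\to 0$. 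Combined with the assumption \eqref{coeffiect} that $\sum_{n,m}|a_n^m|^2 \leqslant C$, each of the four far-field differences is controlled by the corresponding scalar factor, which tends to zero under its own regime, and the sharp rates stated in the remark after Theorem \ref{mainresult} are recovered.

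The main obstacle is verifying that the bounds on $q_n, p_n, l_n$ are genuinely \emph{uniform} in $n$, not just asymptotic in the two separate regimes (small $n$ versus large $n$) in which they were derived in the proposition. For fixed small $n$, the ratios $Q(n,k_b)$ are controlled by the small/large argument asymptotics of $j_n$ via Lemma \ref{b2}; for large $n$, the extra decay coming from $h_n^{(1)}(kr)/j_n(kr) \sim r^{-(2n+1)}$ evaluated on the unit sphere of the far-field reduces to a harmless bounded factor, so that the effective coefficient $s_n$ retains its size from the scalar prefactor alone. Once this uniformity is in place, Parseval closes the argument in one line and delivers the four stated $L^2(\mathbb{S}^2)$ convergences simultaneously.
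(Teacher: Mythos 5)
Your proposal is essentially the paper's own proof: read off the far-field coefficient $\frac{1}{k}\mi^{-(n+1)}s_n a_n^m$ from the series, invoke orthogonality of $\{Y_n^m\}$ (Parseval), and combine the uniform-in-$n$ bounds on $s_n\in\{q_n,p_n,l_n\}$ with the $\ell^2$-boundedness \eqref{coeffiect}. One clarification on the uniformity worry you raise: the far-field pattern carries no $r$-dependence, so the ratio $h_n^{(1)}(kr)/j_n(kr)$ never enters; rather, dividing the large-$n$ estimate of $q_n\,h_n^{(1)}(kr)/j_n(kr)$ from the preceding proposition by the factor $h_n^{(1)}(kr)/j_n(kr)$ (which grows factorially in $n$ for fixed $r>1$) shows that $q_n$ itself decays superexponentially in $n$, so the fixed-$n$ bound $|q_n|\lesssim\delta^{-1/2}$ (and likewise for $p_n$, $l_n$) is in fact uniform, which is exactly what Parseval needs.
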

\begin{proof}
From the representations of the scattering fields, we have
\begin{align*}
\mathcal{F}_\delta (\hat{\mb{x}}) - \mathcal{F}_u (\hat{\mb{x}})  &= \frac{1}{k}\sum_{n=0}^\infty \frac{1}{\mi^{n+1}}\sum_{m=-n}^n (c_n^m + \frac{j'_n(k)}{{h_n^{(1)}}'(k)}a_n^m)  Y_n^m(\hat{\mb{x}}) \\
&= \frac{1}{k}\sum_{n=0}^\infty \frac{1}{\mi^{n+1}}\sum_{m=-n}^n q_n a_n^m  Y_n^m(\hat{\mb{x}}),
\end{align*}
where $q_n$ are defined in \eqref{qn}. By the orthogonality of $Y_n^m$ and the boundedness in \eqref{coeffiect}, we can get
\begin{align*}
    \|\mathcal{F}_\delta (\hat{\mb{x}}) - \mathcal{F}_u (\hat{\mb{x}})\|_{L^2(\mathcal{S}^2)}^2 = \frac{1}{k^2}\sum_{n=0}^\infty q_n^2 \sum_{m=-n}^n (a_n^m)^2 \sim \frac{1}{k^2}\delta^{-1}  \rightarrow 0 \quad \textit{\rm as} \; \delta \rightarrow  \infty.
\end{align*}
So $\mathcal{F}_\delta \rightarrow  \mathcal{F}_u$ in  $L^2(\mathbb{S}^2)$. Similarly, we can prove the remaining three convergences in $L^2(\mathbb{S}^2)$.
\end{proof}


\appendix
\section{}
\subsection{Layer potentials}\label{layer}
We shall make use layer-potential techniques to prove the wellposedness results in the previous sections. To that end, we introduce some necessary notations and results on the layer potential operators. Let $\Phi$ be the fundamental solution of the  PDO $\Delta+k^2$:
\begin{equation*}
\Phi (\mb{x}, \mb{y})= 
\begin{cases}
\displaystyle H_0^{(1)}(k|\mb{x}-\mb{y}|) \coma N=2 , \\
\displaystyle -\frac{e^{\mathrm{i} k |\mb{x}-\mb{y}|}}{4\pi|\mb{x}-\mb{y}|} \coma N=3.
\end{cases}
\end{equation*}
Let $B \subset \bbr^N$ be a bounded $C^{2,\alpha}$ domain, the single- and double-layer potential operators are bounded from $L^2(\partial B)$ into $H^1(\bbr^N /\partial B)$ and given by
\begin{align*}
\mathbb{S}_B [\psi] (\mb{x})  &= \int_{\partial B} \Phi (\mb{x},\mb{y}) \psi(\mb{y}) \dx{s_\mb{y}} \coma \mb{x} \in \bbr^N/\partial B, \\
\mathbb{D}_B [\psi] (\mb{x}) &= \int_{\partial B} \frac{\partial \Phi (\mb{x},\mb{y})}{\partial \nu_\mb{y}} \psi(\mb{y}) \dx{s_\mb{y}} \coma \mb{x} \in \bbr^N/\partial B,
\end{align*}
where $\psi$ is a density function defined on the boundary $\partial B$. Here $\nu_{\mb{x}}\in\mathbb{S}^{N-1}$ signifies the exterior unit normal vector to the boundary of the concerned domain $B$ at $\mb{x}$. 

For the single- and double-layer potential on the boundary $\partial B$, we need more regularity. We omit the subscript $B$ of these operators in the sequel for simplicity and it would be clear for the context.
Let $S,K$ respectively denote the single- and double-layer potential operators on the boundary
, given by
\begin{align*}
S [\psi] (\mb{x})  &= \int_{\partial B} \Phi (\mb{x},\mb{y}) \psi(\mb{y}) \dx{s_\mb{y}} \coma \mb{x} \in\partial B, \\
K [\psi] (\mb{x}) &= \int_{\partial B} \frac{\partial \Phi (\mb{x},\mb{y})}{\partial \nu_\mb{y}} \psi(\mb{y}) \dx{s_\mb{y}} \coma \mb{x} \in \partial B,
\end{align*}
and the normal derivative operators $K^*$ and $T$ are given by
\begin{align*}
K^* [\psi] (\mb{x})  &= \int_{\partial B} \frac{\partial \Phi (\mb{x},\mb{y})}{\partial \nu_\mb{x}} \psi(\mb{y}) \dx{s_\mb{y}} \coma \mb{x} \in \partial B,\\
T [\psi] (\mb{x}) &= \frac{\partial}{\partial \nu_{\mb{x}}}\int_{\partial B} \frac{\partial \Phi (\mb{x},\mb{y})}{\partial \nu_\mb{y}} \psi(\mb{y}) \dx{s_\mb{y}} \coma \mb{x} \in \partial B .
\end{align*}
The behaviors of potential operators across the boundary are described by the jump relations. 
It is known that the single-layer potential is continuous across $\partial B$ and the normal derivative satisfies the trace formula
\begin{equation*}
\left. \frac{\partial}{\partial \nu}\mathbb{S}_B [\psi] (\mb{x})\right|_{\pm} = \left(\pm\frac{I}{2} + K^*\right) [\psi](\mb{x}) \coma \mb{x}\in \partial B,
\end{equation*}
where the subscripts $\pm$ indicate the limits from outside and inside of the domain $B$ respectively. The double-layer potential satisfies the trace formula
\begin{equation*}
\left. \mathbb{D}_B [\psi] (\mb{x}) \right|_{\pm} = \left(\mp \frac{I}{2} +K \right) [\psi] (\mb{x}) \coma \mb{x}\in \partial B,
\end{equation*}
and 
\begin{equation*}
\left. \frac{\partial}{\partial \nu}\mathbb{D}_B [\psi] (\mb{x}) \right|_+ - \left. \frac{\partial}{\partial \nu}\mathbb{D}_B [\psi] (\mb{x}) \right|_- = 0  \coma \mb{x}\in \partial B.
\end{equation*}

We recall the mapping properties of potential operators.
\begin{lemma}
Let $\partial B$ be of class $C^{2,\alpha}$. Then the single-layer potential $\mathbb{S}$ defines a bounded linear operator from $H^{-\frac{1}{2}}(\partial B)$ into $H^1_{loc}(\bbr^N \backslash \overline{B})$. the double-layer potential $\mathbb{D}$ defines a bounded linear operator from $H^{\frac{1}{2}}(\partial B)$ into $H^1_{loc}(\bbr^N \backslash \overline{B})$. Furthermore, the surface potential operator $S$ is bounded from $L^2(\partial B)$ into $H^1(\partial B)$. The operator $K$ and $K^*$ are bounded from $H^{-\frac{1}{2}}(\partial B)$ into $H^{\frac{1}{2}}(\partial B)$. The operator $T$ is bounded from $H^{\frac{1}{2}}(\partial B)$ into $H^{-\frac{1}{2}}(\partial B)$.
\end{lemma}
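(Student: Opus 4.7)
The plan is to reduce all six claims to well-known mapping properties of the Laplace layer potentials, then account for the Helmholtz correction as a smoothing perturbation. The key device is the splitting $\Phi(\mb{x},\mb{y})=\Phi_0(\mb{x}-\mb{y})+R(\mb{x}-\mb{y})$, where $\Phi_0$ is the fundamental solution of $-\Delta$ in $\bbr^N$ and $R$ is real-analytic on $\bbr^N\setminus\{0\}$ and continuous across the diagonal (in 2D, once the logarithmic piece is absorbed into $\Phi_0$, the remainder $R$ is bounded). With this splitting, each of $\mathbb{S},\mathbb{D},S,K,K^*,T$ decomposes as the analogous Laplace operator plus an operator whose integral kernel is smooth in both variables; the latter is smoothing and therefore bounded between any pair of Sobolev spaces one wishes.

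First I would establish the volume mapping properties of $\mathbb{S}$ and $\mathbb{D}$. The Laplace analogues $\mathbb{S}_0:H^{-1/2}(\partial B)\to H^1_{loc}(\bbr^N\backslash\overline{B})$ and $\mathbb{D}_0:H^{1/2}(\partial B)\to H^1_{loc}(\bbr^N\backslash\overline{B})$ are classical on Lipschitz, and a fortiori on $C^{2,\alpha}$, boundaries (Verchota, Jerison--Kenig). For the Helmholtz versions, I would apply the decomposition above: the Laplace part is handled directly, while the remainder is an integral against a smooth kernel, trivially bounded into $H^1(B_R\backslash\overline{B})$ for every $R$, yielding the desired $H^1_{loc}$ estimate.

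Next I would treat the weakly singular boundary operators $S$, $K$, and $K^*$, viewing them as pseudodifferential operators on the compact $C^{2,\alpha}$ manifold $\partial B$. The Laplace counterpart $S_0$ is an elliptic pseudodifferential operator of order $-1$, hence bounded $L^2(\partial B)\to H^1(\partial B)$ and $H^{-1/2}(\partial B)\to H^{1/2}(\partial B)$; for $K_0$ and $K_0^*$, the geometric factor $(\mb{x}-\mb{y})\cdot\nu$ in the kernel gains one full power of regularity on a $C^{2,\alpha}$ surface (the Günter trick), promoting them to order $-1$ operators and giving the $H^{-1/2}(\partial B)\to H^{1/2}(\partial B)$ bound. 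Again, the Helmholtz correction contributes only smoothing operators, and the claimed bounds transfer verbatim from the Laplace case.

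The main obstacle is the hypersingular operator $T$, whose kernel behaves like $|\mb{x}-\mb{y}|^{-N}$ near the diagonal and is not even locally integrable, so boundedness cannot be read off from the kernel size. The standard resolution is a Maue-type integration-by-parts identity on $\partial B$: for smooth test densities one establishes, symbolically,
\[
\langle T\psi,\phi\rangle_{\partial B}=-\langle \nabla_{\partial B}\psi,\,S(\nabla_{\partial B}\phi)\rangle_{\partial B}+k^2\langle \nu\psi,\,S(\nu\phi)\rangle_{\partial B},
\]
which expresses $T$ as a composition of first-order tangential differential operators with the weakly singular $S$ already shown bounded $H^{-1/2}(\partial B)\to H^{1/2}(\partial B)$. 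Since tangential differentiation and multiplication by $\nu$ map $H^{1/2}(\partial B)$ into $H^{-1/2}(\partial B)$ componentwise, a duality argument yields $T:H^{1/2}(\partial B)\to H^{-1/2}(\partial B)$, and density of smooth functions extends the estimate to the full Sobolev endpoints.
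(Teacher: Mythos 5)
The paper states this lemma without proof, presenting it as a recall of standard mapping properties (as found, e.g., in Colton--Kress, McLean, or N\'ed\'elec), so there is no ``paper proof'' to compare against. Your proposal supplies one, and the structure is the standard one and is essentially sound: split $\Phi = \Phi_0 + R$, invoke the classical Laplace mapping properties on a $C^{2,\alpha}$ (indeed Lipschitz suffices) boundary, observe that the correction operators generated by $R$ gain strictly more regularity than the corresponding Laplace operators, and treat the hypersingular operator $T$ via a Maue/Calder\'on integration by parts that reduces it to compositions of first-order tangential operators with $S$, followed by duality.

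Two technical points in your write-up should be tightened. First, the remainder $R$ is \emph{not} real-analytic across the diagonal, nor is the correction ``bounded between any pair of Sobolev spaces one wishes.'' In $3$D one has $R(\mb{x},\mb{y}) = c_0 + c_1|\mb{x}-\mb{y}| + c_2|\mb{x}-\mb{y}|^2 + \cdots$; the odd powers of $|\mb{x}-\mb{y}|$ are only Lipschitz at $\mb{x}=\mb{y}$, and in $2$D there are $|\mb{x}-\mb{y}|^2\log|\mb{x}-\mb{y}|$ terms. What your argument actually uses, and what is true, is that $R$ is finitely more regular than $\Phi_0$ (two extra orders of weak singularity), which is enough for all five claimed bounds. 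Second, the Maue identity you display, written with $\nabla_{\partial B}$, is the two-dimensional form; in $3$D the standard identity is expressed via the surface scalar/vector curl, $\langle T\psi,\phi\rangle = \langle \mathrm{curl}_{\partial B}\psi,\,S\,\mathrm{curl}_{\partial B}\phi\rangle - k^2\langle \nu\psi,\,S(\nu\phi)\rangle$ up to sign conventions. This does not change the estimate, since either version exhibits $T$ as (first-order tangential) $\circ S\circ$ (first-order tangential) plus a bounded correction, and the duality step then gives $T:H^{1/2}(\partial B)\to H^{-1/2}(\partial B)$. Finally, invoking the pseudodifferential calculus ``on the $C^{2,\alpha}$ manifold $\partial B$'' is a slight overreach, since the classical calculus requires a $C^\infty$ manifold; for a $C^{2,\alpha}$ boundary one should instead use the direct kernel-based Sobolev estimates for weakly singular operators, which give exactly the stated bounds for $S$, $K$, and $K^*$ without appealing to a symbol calculus.
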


\subsection{Special functions}
The spherical Bessel functions and Hankel functions are the solutions of the spherical Bessle differential equation:
\begin{equation*}
x^2 \frac{d^2y}{dx^2} +2x\frac{dy}{dx} + (x^2 -n(n+1))y = 0.
\end{equation*}
By direct calculations, the solutions of spherical Bessle differential equation can be represented as
\begin{align*}
j_n(x) &= \sum_{m=0}^\infty \frac{(-1)^mx^{n+2m}}{2^mm!1 \cdot 3 \cdots (2n+2m+1)}, \\
y_n(x) &= -\frac{(2n)!}{2^nn!}\sum_{m=0}^\infty \frac{(-1)^m x^{2m-n-1}}{2^mm!(-2n+1)(-2n+3)\cdots(-2n+2m-1)},
\end{align*} 
where $n = 0,1,2,\cdots$.
The functions $j_n$ and $y_n$ are called spherical Bessel functions and spherical Neumann functions. The linear combinations
\begin{equation*}
h_n^{(1,2)} := j_n \pm \mi y_n
\end{equation*}
are known as spherical Hankel functions of the first and second kind of order $n$.

Let us the recall some asymptotic formulas of Spherical Bessel and Hankel functions.
\begin{lemma}\label{b1}
when $n$ is sufficiently large, it holds that
\begin{equation*}
j_n(x) = \frac{x^n}{(2n+1)!!}(1+ \mathcal{O}(\frac{1}{n})) \coma h_n^{(1)}(x) = \frac{(2n-1)!!}{\mi x^{n+1}}(1+ \mathcal{O}(\frac{1}{n})) ,
\end{equation*}
and 
\begin{equation*}
  j'_n(x) = \frac{nx^{n-1}}{(2n+1)!!}(1+ \mathcal{O}(\frac{1}{n})) \coma {h_n^{(1)}}'(x) = -\frac{(n+1)(2n-1)!!}{\mi x^{n+2}}(1+ \mathcal{O}(\frac{1}{n})).
\end{equation*}
\end{lemma}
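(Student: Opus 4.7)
The statement concerns the large-order asymptotics of the spherical Bessel function $j_n$, the spherical Neumann function $y_n$, and their combination $h_n^{(1)} = j_n + \mi y_n$, together with their first derivatives. The plan is to extract the leading term of each series representation (stated immediately above the lemma) and to estimate the tail as a factor $1+\mathcal{O}(1/n)$ times the leading term, treating $x$ as lying in a fixed bounded region.

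First, for $j_n(x)$ I would isolate the $m=0$ summand of the series, which is exactly $x^n/(2n+1)!!$. The ratio of consecutive summands is
\begin{equation*}
\frac{a_{m+1}}{a_m} \;=\; -\frac{x^{2}}{2(m+1)(2n+2m+3)},
\end{equation*}
whose absolute value is bounded by $x^2/[2(m+1)(2n+3)]$ uniformly in $m\geqslant 0$; in particular, for $x$ in a bounded set and $n$ sufficiently large the series is dominated by a geometric series with ratio $\mathcal{O}(1/n)$. Summing the geometric tail yields $j_n(x) = \frac{x^n}{(2n+1)!!}\bigl(1+\mathcal{O}(1/n)\bigr)$. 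For $y_n(x)$ I would do the same thing with the $m=0$ summand, noting that $(2n)!/(2^n n!) = (2n-1)!!$, so the leading contribution is $-(2n-1)!!\, x^{-n-1}$. The consecutive-term ratio again carries a factor $1/(2n+2m-1) \cdot 1/(2m+2)$, producing an $\mathcal{O}(1/n)$ tail bound.

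Next, for $h_n^{(1)}(x) = j_n(x)+\mi y_n(x)$ I would observe that for fixed bounded $x$, $|j_n(x)|\lesssim x^n/(2n+1)!!$ while $|y_n(x)|\gtrsim (2n-1)!!/x^{n+1}$, so $j_n$ is smaller than $y_n$ by a factor $x^{2n+1}/[(2n+1)!!(2n-1)!!] = \mathcal{O}(1/n^2) \cdot (\text{something})$, which is in particular $\mathcal{O}(1/n)$ relative to $y_n$. Hence
\begin{equation*}
h_n^{(1)}(x) \;=\; \mi y_n(x)\bigl(1+\mathcal{O}(1/n)\bigr) \;=\; \frac{(2n-1)!!}{\mi x^{n+1}}\bigl(1+\mathcal{O}(1/n)\bigr),
\end{equation*}
after using $\mi\cdot(-1)=1/\mi$.

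Finally, for the derivatives there are two equivalent routes. The cleanest is to differentiate the series termwise (justified by uniform convergence on compact subsets away from $x=0$) and apply the same ratio argument; this directly gives $j_n'(x) = nx^{n-1}/(2n+1)!! \cdot \bigl(1+\mathcal{O}(1/n)\bigr)$ and ${h_n^{(1)}}'(x) = -(n+1)(2n-1)!!/(\mi x^{n+2}) \cdot \bigl(1+\mathcal{O}(1/n)\bigr)$. Alternatively, one may use the standard recurrence $f_n'(x) = f_{n-1}(x) - \tfrac{n+1}{x}f_n(x)$ for $f_n\in\{j_n,h_n^{(1)}\}$ and substitute the asymptotics of $f_{n-1}, f_n$ just established; the dominant contribution turns out to come from the second summand for $j_n$ and from the first summand (after the $1/(2n-1)$ shift in double factorials) for $h_n^{(1)}$. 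The main bookkeeping obstacle is just verifying that the two pieces in the recurrence do not cancel at leading order, which is immediate once the prefactors $x^n/(2n+1)!!$ and $(2n-1)!!/(\mi x^{n+1})$ are inserted. Aside from this mild cancellation check, the entire argument is a direct term-by-term comparison.
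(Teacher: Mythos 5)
Your proposal is essentially correct and more self-contained than what the paper actually writes: the paper's proof of this lemma only treats the derivative formulas, deriving them from the recurrence $\varphi_n'(x)=\tfrac{n}{x}\varphi_n(x)-\varphi_{n+1}(x)$ applied to the (unproved but standard) asymptotics of $j_n$ and $h_n^{(1)}$, whereas you also reconstruct those base asymptotics from the power series just above the lemma. Your primary route for the derivatives (termwise differentiation of the series) is genuinely different from the paper and is, if anything, the cleanest option; the recurrence route you offer as an alternative is the paper's route, just written with the shift $\varphi_n'=\varphi_{n-1}-\tfrac{n+1}{x}\varphi_n$ rather than with $\varphi_{n+1}$.

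Two of your side remarks, however, are not quite right, and are worth fixing if you pursue the recurrence variant. First, for the $y_n$ series the consecutive-term ratio is $\bigl|b_{m+1}/b_m\bigr| = \tfrac{x^2}{2(m+1)\,|2n-2m-1|}$, not a factor $1/(2n+2m-1)$; the factor $|2n-2m-1|$ becomes $\mathcal{O}(1)$ when $m$ is near $n$, so the ratio is not uniformly $\mathcal{O}(1/n)$. The tail is still $\mathcal{O}(1/n)$, but one should argue by noting $|b_m/b_0|\leqslant \tfrac{x^{2m}}{2^m m!\,(2n-1)}$ and summing, rather than invoking a geometric ratio. Second, your dominance claim in the recurrence $\varphi_n'=\varphi_{n-1}-\tfrac{n+1}{x}\varphi_n$ is backwards: for $j_n'$ the two summands $x^{n-1}/(2n-1)!!$ and $(n+1)x^{n-1}/(2n+1)!!$ are of the same order and partially cancel to leave $nx^{n-1}/(2n+1)!!$, while for ${h_n^{(1)}}'$ it is the second summand $-\tfrac{n+1}{x}h_n^{(1)}$ that dominates, since $h_{n-1}^{(1)}$ is smaller by a factor $\mathcal{O}(1/n^2)$. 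With the paper's version $\varphi_n'=\tfrac{n}{x}\varphi_n-\varphi_{n+1}$ the roles are reversed (single dominant term for $j_n'$, partial cancellation for ${h_n^{(1)}}'$). Either recurrence works once the bookkeeping is done carefully, and your primary termwise-differentiation argument avoids the issue entirely.
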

\begin{proof}
With the aid of the recurrence relations 
\begin{equation}\label{recu}
x\varphi'_n(x) = n\varphi_n(x) - x\varphi_{n+1}(x) \coma \varphi_n (x)= j_n(x),h_n^{(1)}(x), 
\end{equation}
we can get
\begin{align*}
  j'_n(x) &= \frac{n}{x}j_{n}(x) - j_{n+1}(x) = \frac{nx^{n-1}}{(2n+1)!!}\left(1 - \frac{x^2}{n(2n+3)} +\mathcal{O}(\frac{1}{n}) \right) \\
  &= \frac{nx^{n-1}}{(2n+1)!!}(1+ \mathcal{O}(\frac{1}{n}))
\end{align*}
and 
\begin{align*}
 {h_n^{(1)}}'(x) &= \frac{n}{x} h_n^{(1)}(x) - h_{n+1}^{(1)}(x) =\frac{(2n+1)!!}{\mi x^{n+2}}\left(\frac{n}{2n+1}-1 +\mathcal{O}(\frac{1}{n}) \right) \\
  &= -\frac{(n+1)(2n-1)!!}{\mi x^{n+2}}(1+ \mathcal{O}(\frac{1}{n})).
\end{align*}
We have finished the proof.
\end{proof}

\begin{lemma}\label{b2}
When $x \rightarrow 0$, it holds that
\begin{equation*}
\frac{j'_n(x)}{j_n(x)} = \frac{n}{x} \left(1 +\mathcal{O}(x^2) \right)  \sim \frac{n}{x}.
\end{equation*}
When $\Re x \rightarrow \infty$ and $\Im x \rightarrow \infty$, it holds that
\begin{equation*}
\frac{j'_n(x)}{j_n(x)} =  e^{-\frac{\pi}{2}\mi } (1 + \mathcal{O}(|x|^{-1})) \sim e^{-\mi \frac{\pi}{2}}.
\end{equation*}
\end{lemma}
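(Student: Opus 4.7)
}
The plan is to treat the two regimes separately, using the explicit power series for $j_n$ in the small-argument case and the large-argument asymptotics of the spherical Hankel functions in the complex case.

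For the first assertion ($x\to 0$), I would use directly the series representation
\[
j_n(x) = \frac{x^n}{(2n+1)!!}\Bigl(1 - \frac{x^2}{2(2n+3)} + \mathcal{O}(x^4)\Bigr)
\]
displayed in the appendix, differentiate term by term to obtain
\[
j'_n(x) = \frac{n\,x^{n-1}}{(2n+1)!!}\Bigl(1 - \frac{n+2}{n}\cdot\frac{x^2}{2(2n+3)} + \mathcal{O}(x^4)\Bigr),
\]
and then divide. The leading factor $x^{n-1}/x^n = 1/x$ yields the $n/x$ prefactor, while the two $\mathcal{O}(x^2)$ corrections combine (after geometric-series expansion of the denominator) into a single $\mathcal{O}(x^2)$ remainder. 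This is purely algebraic and should not present any difficulty.

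For the second assertion, the plan is to pass through the spherical Hankel functions. Writing $j_n = \tfrac{1}{2}(h_n^{(1)}+h_n^{(2)})$ and using the standard large-argument formulas
\[
h_n^{(1)}(x) = \frac{(-\mi)^{n+1}e^{\mi x}}{x}\bigl(1+\mathcal{O}(|x|^{-1})\bigr),\qquad h_n^{(2)}(x) = \frac{\mi^{n+1}e^{-\mi x}}{x}\bigl(1+\mathcal{O}(|x|^{-1})\bigr),
\]
I observe that under the hypothesis $\Im x \to \infty$ one has $|e^{\mi x}|=e^{-\Im x}\to 0$, so the $h_n^{(2)}$ branch dominates. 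Hence
\[
j_n(x) = \frac{\mi^{n+1}e^{-\mi x}}{2x}\bigl(1+\mathcal{O}(|x|^{-1})\bigr),\qquad j'_n(x) = \frac{\mi^{n+1}(-\mi)e^{-\mi x}}{2x}\bigl(1+\mathcal{O}(|x|^{-1})\bigr),
\]
where the derivative is computed by differentiating the leading exponential and absorbing the $-1/x^2$ contribution into the remainder. Forming the ratio, the prefactors $\mi^{n+1}e^{-\mi x}/(2x)$ cancel and leave $-\mi=e^{-\mi\pi/2}$ with a multiplicative error of $1+\mathcal{O}(|x|^{-1})$.

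The only delicate point I anticipate is making the ``$h_n^{(2)}$ dominates'' argument fully rigorous, that is, controlling the subdominant $h_n^{(1)}$ contribution uniformly in $n$ as $\Re x,\Im x\to\infty$ simultaneously. Since the ratio $|h_n^{(1)}(x)/h_n^{(2)}(x)|$ is of order $e^{-2\Im x}$ for fixed $n$, the smallness of this term together with a similar bound for the derivative guarantees that the subdominant contributions are absorbed into the $\mathcal{O}(|x|^{-1})$ remainder; to make this uniform I would use the hypothesis that $\Re x$ and $\Im x$ both go to infinity to ensure $e^{-2\Im x}=o(|x|^{-1})$. With that observation in place, the computation above yields the claimed asymptotic.
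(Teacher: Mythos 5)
Your proposal is correct in its conclusions and follows the same underlying mechanism as the paper for the second asymptotic: identify the exponential that dominates when $\Im x \to +\infty$ (the paper does this by writing $j_n$ in cosine form and applying Euler's formula, you do it via the decomposition $j_n = \tfrac12(h_n^{(1)} + h_n^{(2)})$; these are the same thing). The first assertion is also handled correctly, and termwise differentiation of the convergent power series is unproblematic.

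The one genuine gap is in how you obtain the large-argument asymptotic of $j_n'$. You write that ``the derivative is computed by differentiating the leading exponential and absorbing the $-1/x^2$ contribution into the remainder,'' i.e.\ you differentiate the relation $j_n(x) = \frac{\mi^{n+1}e^{-\mi x}}{2x}\bigl(1+\mathcal{O}(|x|^{-1})\bigr)$ term by term. This is not automatically valid: a relative error of size $\mathcal{O}(|x|^{-1})$ in a function need not remain $\mathcal{O}(|x|^{-1})$ after differentiation, so either you must cite the known fact that the large-argument asymptotics of spherical Bessel/Hankel functions are differentiable (or derive it by Cauchy estimates in a sector), or you should sidestep the issue entirely. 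The paper does the latter: it invokes the recurrence $x\varphi_n'(x) = n\varphi_n(x) - x\varphi_{n+1}(x)$ to write $j_n'/j_n = n/x - j_{n+1}/j_n$, so that only the undifferentiated asymptotic of $j_n$ and $j_{n+1}$ is needed. Incorporating that recurrence would close the gap and also streamline the small-argument case. Finally, the uniformity-in-$n$ concern you raise at the end is not actually demanded by the lemma as stated (the $\mathcal{O}$-constants are allowed to depend on $n$), so while your $e^{-2\Im x}=o(|x|^{-1})$ observation is sound, it is addressing a requirement the lemma does not impose.
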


\begin{proof}
When $x \rightarrow 0$, the the recurrence relation \eqref{recu} gives
\begin{equation*}
\frac{j'_n(x)}{j_n(x)}  = \frac{n}{x} - \frac{j_{n+1}(x)}{j_n(x)} = \frac{n}{x} - \frac{2n+1}{2n+3}x (1 + \mathcal{O}(x^2)) =  \frac{n}{x} \left(1 +\mathcal{O}(x^2) \right) .
\end{equation*}
For the large arguments, $j_n(z)$ have the following asymptotic expansions
\begin{align*}
j_n(x) &= \frac{1}{x} \left(\cos (x- \frac{n\pi}{2}- \frac{\pi}{2}) + e^{|\Im (x)|}\mathcal{O}(|x|^{-1}) \right)  \\
&= \frac{1}{2x} \left(e^{\mi(x- \frac{n\pi}{2}- \frac{\pi}{2})} + e^{-\mi(x- \frac{n\pi}{2}- \frac{\pi}{2})}+ e^{|\Im (x)|}\mathcal{O}(|x|^{-1}) \right) \\
&=\frac{1}{2x} e^{|\Im x|}\left( e^{\mi(-\Re x + \frac{n\pi}{2} + \frac{\pi}{2})}+ \mathcal{O}(|x|^{-1}) \right)\coma |\arg x|<\pi ,
\end{align*}
where we have used the Euler's formula. Using the recurrence formula \eqref{recu}  again yields
\begin{equation*}
\frac{j'_n(x)}{j_n(x)}  = \frac{n}{x} - \frac{j_{n+1}(x)}{j_n(x)} = \frac{n}{x} - e^{\mi \frac{\pi}{2}}(1+ \mathcal{O}(|x|^{-1}))  = e^{- \frac{\pi}{2}\mi} (1 + \mathcal{O}(|x|^{-1})).
\end{equation*}
We have finished the proof.
\end{proof}

\section*{Acknowledgments}

The work was supported by NSFC/RGC Joint Research Scheme (project N\_{CityU}\linebreak[1] 101/21), ANR/RGC Joint Research Scheme (project A\_CityU203/19) and the Hong Kong RGC General Research Funds (projects 11311122, 11304224 and 11300821).

\bibliographystyle{abbrv}
\bibliography{referenceasy.bib}

\end{document}